\documentclass[12pt,amstex,leqno]{amsart}
\usepackage{latexsym}
\usepackage{amsthm}
\usepackage{amsmath}

\usepackage{amsfonts}
\usepackage{amssymb}
\usepackage{appendix}

\input xy
\xyoption{all}

\swapnumbers

\newtheorem*{rw}{Related Work}
\newtheorem*{ack}{Acknowledgement}

\newtheorem{theorem}{Theorem}[section]
\newtheorem{lemma}[theorem]{Lemma}
\newtheorem{prop}[theorem]{Proposition}
\newtheorem{corollary}[theorem]{Corollary}
\newtheorem{problem}[theorem]{Open problem}

\theoremstyle{definition}
\newtheorem{definition}[theorem]{Definition}

\newtheorem{example}[theorem]{Example}

\newtheorem{exs}[theorem]{Examples}
\newtheorem{remark}[theorem]{Remark}

\numberwithin{equation}{section}

\newcommand\card{\operatorname{card\,}}
\newcommand\Ord{\operatorname{Ord\,}}

\newcommand\ex{\operatorname{ex}}

\newcommand\Ab{\mathbf{Ab}}
\newcommand\Top{\mathbf{Top}}

\newcommand\op{\operatorname{op}}
\newcommand\obj{\operatorname{obj}}

\newcommand\Acc{\operatorname{Acc}}

\newcommand\Set{\mathbf{Set}}
\newcommand\Un{\mathbf{Un}}
\newcommand\Cat{\mathbf{Cat}}
\newcommand\Gra{\operatorname{\mathbf{Gra}}}

\newcommand\colim{\operatorname{colim}}

\newcommand\el{\text{\it el\,}}

\newcommand\ev{\operatorname{\textit{ev}}}

\newcommand\id{\operatorname{id}}

\newcommand\ca{\mathcal {A}} 
\newcommand\cb{\mathcal {B}}
\newcommand\cc{\mathcal {C}}
\newcommand\cd{\mathcal {D}}

\newcommand\cg{\mathcal {G}}

\newcommand\ck{\mathcal {K}}
\newcommand\cl{\mathcal {L}}
\newcommand\crr{\mathcal {R}}

\newcommand\cp{\mathcal {P}}

\newcommand\N{\mathbb N}

\date{April 1, 2019}
\begin{document}

\title{How nice are free completions of categories?}

\author{Ji\v r\'\i{} Ad\'amek}
\address{\newline Department of Mathematics,\newline Faculty of Electrical Engineering,\newline
Czech Technical University in Prague\newline Czech Republic} 
\email{j.adamek@tu-bs.de}

\author{Ji\v r\'\i{} Rosick\'y}
\address{\newline Department of Mathematics and Statistics,\newline
Masaryk University,\newline
 Kotl\'a\v rsk\'a 2, 611 37 Brno,\newline Czech Republic}
\email{rosicky@math.muni.cz}

\thanks{Both authors are supported by the Grant agency of the Czech republic under the grant 19-00902S.
The second author was also supported by the Grant agency of the Czech republic under the grant 
               P201/12/G028.} 

\begin{abstract}
Every category $\ck$ has a free completion $\cp \ck$ under colimits and a free completion $\Sigma\ck$ under coproducts. A number of properties of $\ck$ transfer to $\cp \ck$ and $\Sigma\ck$ (e.g., completeness or cartesian closedness). We prove that $\cp\ck$ is often a pretopos, but, for $\ck$ large, seldom a topos. Moreover, for complete categories $\ck$ we prove that $\cp\ck$ is locally cartesian closed whenever $\ck$ is additive or cartesian closed or dual to an extensive category. We also prove that $\cp \ck$ is (co)wellpowered if $\ck$ is a 'set-like' category, but it is neither wellpowered nor cowellpowered for a number of important categories.
\end{abstract}

\dedicatory{Dedicated to Ale\v s Pultr on his eightieth birthday.}

\maketitle

\section{Introduction}
The free completion $\cp\ck$ of a locally small category $\ck$ under (small) colimits is well known: if $\ck$ is small, then $\cp\ck$ is the presheaf category $[\ck^{\op},\Set]$. For large categories $\cp\ck$ can be described as the full subcategory of 
$[\ck^{\op},\Set]$ on \textit{small} functors, i.e., small colimits of hom-functors. We study several properties of this completion: is it (locally) cartesian closed?, a topos?, wellpowered or cowellpowered? 

For complete categories $\ck$ cartesian closedness of $\cp\ck$ was fully characterized by the second author \cite{R}, but we present
two improvements. One is that cartesian closedeness of $\cp\ck$ implies that $\cp\ck$ is \textit{locally} cartesian closed and complete. And the other is that for $\ck$ complete the completion $\cp\ck$ is locally cartesian closed provided that $\ck$ is
\begin{enumerate}
\item[(a)] additive -- example: $\cp\Ab$, or
\item[(b)] cartesian closed -- examples: $\cp\Set$, $\cp\Cat$, or
\item[(c)] dual to an extensive category -- example: 
$$
\cp(\Set^{\op})=\Acc[\Set,\Set].
$$
\end{enumerate} 
The last example is the category of all accessible set functors. We will see that this category is a locally cartesian closed pretopos,
i.e., a category both exact and extensive. Moreover $\Acc[\Set,\Set]$ is wellpowered, cowellpowered and concrete (i.e., a faithful functor to $\Set$ exists). Another interesting case is the category $\Top$ of topological spaces: the fact, proved by the second author in \cite{R}, that $\cp\Top$ is cartesian closed plays a role in Scott's theory of equilogical spaces \cite{BBS}; we will see that $\cp\Top$ is also a locally cartesian closed pretopos, but it is neither wellpowered nor cowellpowered nor concrete.

Whereas $\cp\ck$ is a topos for every small category, we prove that it practically never is a topos for large ones: if $\ck$ has copowers
and finite intersections, then $\cp\ck$ is not a topos unless $\ck$ is essentially small. However, $\cp\ck$ always is a pretopos
and monomorphisms and epimorphisms in it are regular. 

The situation with the free completion $\Sigma\ck$ under coproducts is simpler. If $\ck$ has a strict initial object 
$0$ (i.e., $\ck(X,0)\neq\emptyset$ inplies $X\cong 0$) and is complete, then $\Sigma\ck$ is cartesian closed iff $\ck$ is. The same holds for local cartesian closedness and for being a topos. However, $\Sigma\ck$ is also cartesian closed for every additive category with products. Here are some examples of the behaviour of $\Sigma\ck$ and $\cp\ck$ for $\ck=\Set$, 
$\Ab$ (abelian groups) or $\Cat$ (small categories), $\ck_f$ denotes the full subcategory of finite sets or groups, resp. We shorten "cartesian closed" to cc and "locally cartesian closed" to lcc.
\vskip 2mm

\renewcommand{\arraystretch}{1.2}
\begin{tabular}{|c|c|c|c|c|c|c|}
\quad & $\ \Set$\  & $\ \Set_f$\ &\  $\Ab$\ &\  $\Ab_f$\ &\   $\Cat$ \quad &\  $\Top$  \quad  \\
\hline
$\ck$&\ \  topos\ \  &\ \ topos\ \  & --- & ---& cc& --- \\
\hline
$\cp\ck$ & lcc &\ \  topos\ \  &\ \ lcc\ \ &\ \  topos\ \  &  lcc & lcc\ \\
\hline
$\Sigma \ck$ & topos & --- & \ cc\,(lcc?)\ \  & cc& --- & ---\\
\hline
\end{tabular}
\vskip 2mm

The connection between the two completions, as proved in \cite{R}, is that $\cp\ck$ is the exact completion of $\Sigma\ck$. 
The  question of when an exact completion is a topos was studied in \cite{M}, at the end of Section 4 we turn to the results of that
paper.

Finally, for "set-like" categories we prove that $\cp\ck$ is wellpowered and cowellpowered. 
 Whereas $\cp \ck$ is neither wellpowered nor cowellpowered for
 $\Ab, \Cat, \Top$ and a number of other categories.
 
\begin{rw}
{
\em
Properties of the category $\cp(\Set^{\op})$ of accessible set functors  are also studied by Barto \cite{B}: he proves that this category
is concrete (using a method different from ours) and universal, i.e., all concrete categories can be fully embedded into $\cp(\Set^{\op})$.

Cartesian closedness of exact completions is investigated by Clementino et al. \cite{CHR}. For example, the exact completion
of metric spaces (in Lawevere's sense) and Lowen's approach spaces are cartesian closed.
}
\end{rw}  

\begin{ack}
{
\em
The authors are very grateful to the referee and the editor whose suggestions have improved the presentation of our results.
}
\end{ack}

\section{The Free Coproduct Completion}
For every category $\ck$ the free completion under coproducts is denoted by 
$$
\Sigma \ck.
$$
That is, the category $\Sigma\ck$ has coproducts, and there is a full embedding $E\colon \ck \to \Sigma \ck$ such that every functor 
$F\colon \ck \to \cl$ where $\cl$ has coproducts has an extension to a coproduct-preserving functor $\overline F \colon \Sigma \ck \to \cl$ unique up to natural isomorphism. This category $\Sigma\ck$ can be described as the category of all collections $A = (A_i)_{i\in I}$ of objects of $\ck$, where a morphism $f\colon (A_i)_{i\in I}\to (B_j)_{j\in J}$ consists of 

(a) a function $\hat f \colon I\to J$, and

(b) a collection of morphisms $f_i\colon A_i \to B_{\hat f(i)}$, $i\in I$.

\noindent
Composition and identity morphisms  are as expected.
The embedding $\ck \hookrightarrow \Sigma \ck$ is, by abuse of notation, denoted by $K\mapsto K$ and $f\mapsto f$.

In the present section, properties of $\Sigma\ck$ are studied. Some of them are well known or easy to see:


\begin{example}\label{sigma0}
(a) $\Set$ is the free coproduct completion of the terminal (one-arrow) category.

(b) The category of sets with a unary relation is the free coproduct completion of the two-element chain $0<1$. Indeed, we have an equivalence functor assigning to every collection $(A_i)_{i\in I}$ of binary values the set $I$ with the relation $\{i\in I; A_i=1\}$.

(c) The category $\Set^{\to}$ is the free coproduct completion of $\Set$.

(d) More generally, if a category $\cc$ with coproducts has the property that every object is a coproduct of coproduct-indecomposable objects, then $\cc=\Sigma \ck$ for the full subcategory $\ck$ on all indecomposable objects. See \cite{CV}, Lemma 42.

(e) $\Sigma\ck$ can be described as the full subcategory of $[\ck^{\op},\Set]$ consisting of coproducts of hom-functors.
\end{example}

\begin{remark}\label{sigma}
(1) $\Sigma \ck$ has (finite) limits iff $\ck$ has multi-limits of all (finite) diagrams, see \cite{HT}. Recall that a multi-limit of a diagram $D$ is a set of cones such that every cone of $D$ factorizes through a unique member of that set and, moreover, the factorization is unique. In categories with an initial object limits and multi-limits are clearly the same.

(2) The embedding $\ck \hookrightarrow \Sigma \ck$ clearly preserves existing limits.

(3) Monomorphisms of $\Sigma \ck$ are precisely the morphisms $f\colon A\to B$ such that $\hat f$ is monic in $\Set$ and each $f_i \colon A_i \to B_{\hat f(i)}$ is monic in $\ck$. We always represent them by collections with $I\subseteq J$ where $\hat f$ is the inclusion map.

(4) Analogously, epimorphisms in $\Sigma \ck$ are precisely the morphisms $f$ with $\hat f$ epic in $\Set$ and each $f_i$ epic in $\ck$.

(5) $\Sigma\ck$ is wellpowered (or cowellpowered) iff $\ck$ is.
\end{remark}

\begin{remark}\label{pullback}
If $\ck$ has pullbacks, then pullbacks of morphisms
$$
A\xrightarrow{\quad f \quad} C \xleftarrow{\quad g\quad} B
$$
in $\Sigma\ck$ are computed as follows: form a pullback of $\hat f$ and $\hat g$ in $\Set$
$$
\xymatrix@C=3pc@R=3pc{
& J \ar[dl]_{\hat p} \ar[dr]^{\hat q}&\\
I_A\ar[dr]_{\hat f} & & I_B \ar[dl]^{\hat g}\\
& I_C & }
$$
and for each $j\in J$ form a pullback in $\ck$:
$$
\xymatrix@C=3pc@R=3pc{
& D_j\ar[dl]_{ p_j} \ar[dr]^{q_j}&\\
A_{p(j)}\ar[dr]_{f \hat p(j)} & & B_{q(j)} \ar[dl]^{g_{\hat q(j)}}\\
& C_{\hat f\hat p(j)} & }
$$
This defines an object $D=(D_j)_{j\in J}$ and morphisms $p\colon D\to A$, $q\colon D\to B$ of $\Sigma \ck$ (by the given functions $\hat p$, $\hat q$ and the given components $p_j$, $q_j$). It is easy to see that the following square
$$
\xymatrix@C=3pc@R=3pc{
& D\ar[dl]_{ p} \ar[dr]^{q}&\\
A\ar[dr]_{f} & & B \ar[dl]^{g}\\
& C & }
$$
is a pullback in $\Sigma \ck$.

\end{remark}

\begin{remark}\label{extensive0}
Recall that a finitely complete category with coproducts is \textit{infinitary extensive} if coproducts are universal (i.e., preserved by pulling back along a morphism) and disjoint (i.e., coproduct injections are monic and their pairwise intersections are formed by the initial object). 

It is \textit{extensive} if finite coproducts are universal and disjoint.
\end{remark} 

\begin{prop}\label{extensive}
For every category $\ck$ the following conditions are equivalent:
\begin{enumerate}
\item[(i)] $\Sigma\ck$ is extensive,
\item[(ii)] $\Sigma\ck$ is infinitary extensive and
\item[(iii)] $\ck$ is finitely multi-complete.
\end{enumerate}
 \end{prop}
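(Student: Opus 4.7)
The plan is to establish the cyclic chain (ii)$\Rightarrow$(i)$\Rightarrow$(iii)$\Rightarrow$(ii). The first implication is immediate, since infinitary extensivity specializes to extensivity by restricting the universality and disjointness conditions to finite families. For (i)$\Rightarrow$(iii), both notions in Remark~\ref{extensive0} presuppose finite completeness of $\Sigma\ck$; hence by Remark~\ref{sigma}(1), $\ck$ has multi-limits of all finite diagrams, i.e., is finitely multi-complete.

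The substantive step is (iii)$\Rightarrow$(ii). From Remark~\ref{sigma}(1) we already have that $\Sigma\ck$ is finitely complete, and arbitrary small coproducts exist by the very construction of $\Sigma\ck$. I would then verify disjointness and universality directly from the explicit description of coproducts: the coproduct $\coprod_{k\in K}A^{(k)}$ of collections $A^{(k)}=(A^{(k)}_i)_{i\in I_k}$ is indexed by the disjoint union $\coprod_k I_k$, and each coproduct injection $u_k\colon A^{(k)}\to\coprod_k A^{(k)}$ has the set-theoretic inclusion $I_k\hookrightarrow\coprod_k I_k$ as underlying function and identities as components. By Remark~\ref{sigma}(3) each $u_k$ is monic.

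For disjointness, the pullback of two distinct injections $u_k$ and $u_\ell$ reduces, along the lines of Remark~\ref{pullback}, to the $\Set$-pullback of the disjoint subsets $I_k,I_\ell\subseteq\coprod_k I_k$; this pullback is empty, no pullback in $\ck$ is required, and the result is the empty collection, i.e., the initial object of $\Sigma\ck$. For universality, given $f\colon B\to\coprod_k A^{(k)}$, the underlying function $\hat f$ partitions $I_B$ into the preimages $I_k^B:=\hat f^{-1}(I_k)$. Pulling back $f$ along $u_k$—again no pullback in $\ck$ is needed, since $u_k$ has identities as components—yields exactly $B^{(k)}:=(B_i)_{i\in I_k^B}$, and the canonical comparison $\coprod_k B^{(k)}\to B$ is an isomorphism because $\coprod_k I_k^B=I_B$.

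The key point, and the only possible pitfall, is that the pullbacks required to verify extensivity are pullbacks \emph{along coproduct injections}, which in $\Sigma\ck$ always carry identities in each component. Their computation therefore bypasses pullbacks in $\ck$ altogether, which is why one recovers the full \emph{infinitary} extensivity of $\Sigma\ck$ from only the finite multi-completeness assumption on $\ck$, and why (i) already suffices to force (ii).
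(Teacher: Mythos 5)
Your proof is correct and follows essentially the same route as the paper: the same cycle of implications, with (iii)$\Rightarrow$(ii) verified by the explicit description of coproducts and of pullbacks along coproduct injections in $\Sigma\ck$. The only (welcome) refinement is that you treat arbitrary families directly and make explicit that pullbacks along coproduct injections never require pullbacks in $\ck$, a point the paper leaves implicit by doing the binary case and declaring the general case analogous.
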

\begin{proof}
i $\to$ iii is clear: extensivity includes finite completeness, so use Remark~\ref{sigma}(1).

\noindent
iii $\to$ ii. We prove that coproducts in $\Sigma \ck$ are disjoint and universal. A coproduct of two objects $A=(A_i)_{i\in I}$ and $B= (B_j)_{j\in J}$ is obviously  the collection $C= (C_k)_{k\in I+ J}$ with $C_i=A_i$ and $C_j=B_j$. The description of pullbacks in Remark~\ref{extensive} makes it clear that the pullback of the coproduct injections $i_1\colon (A_i) \to (C_k)$ and $i_2\colon (B_j) \to (C_k)$ is the empty collection, i.e., the initial object of $\Sigma \ck$. The proof that, more generally, coproducts in $\Sigma \ck$  have pairwise disjoint injections is completely analogous.

For the verification that coproducts are universal in $\Sigma \ck$ we also use just a coproduct $(C_k)= (A_i)+ (B_j)$ of a pair of objects, the general proof is again completely analogous. Let
$$ 
f\colon D= (D_l)_{l\in L} \to (C_k)_{k\in I+ J}
$$
be an arbitrary morphism. Then $L= L^1 + L^2$ for $L^1 = \hat f^{-1} (I)$ and $L^2 = \hat f^{-1} (J)$. Thus
$$
D= D^1 + D^2
$$
where  $D^1 = (D_l)_{l\in L^1}$ and $D^2 =(D_l)_{l\in L^2}$. And  the components of $f$ define obvious morphisms $f^1 \colon D^1 \to A$ and $f^2 \colon D^2 \to B$. It is easy to see that we get the following pullbacks
$$
\xymatrix@C=3pc@R=3pc{
D^1\ar [d]_{f^1} \ar [r]^{i_l'}& D
\ar [d]_{f} & D^2 \ar [l]_{i_2'} \ar[d]^{f^2}\\
A\ar[r]_{i_l} & C & B \ar [l]^{i_2}
}
$$
where $i_l'$, $i_2'$ are the coproduct  injections of $D= D^1 +D^2$.
\end{proof}

\noindent
ii $\to$ i is trivial.


\begin{lemma}\label{mproduct}
Let $(A_{ij})_{j\in J}$ be objects of $\Sigma\ck$ for $i\in I$. A product of this family exists in $\Sigma\ck$ iff for every function $\varphi:I\to J$ a multi-product of $A_{i\varphi(i)}$ for $i\in I$ exists in $\ck$.
\end{lemma}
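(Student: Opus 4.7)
The plan is to decode what a cone on the family $(A_i)_{i\in I}$ from an arbitrary object $X=(X_k)_{k\in K}$ of $\Sigma\ck$ looks like, and then match the universal property of a product with that of a multi-product.

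By the description of morphisms in $\Sigma\ck$, a cone $(f^i\colon X\to A_i)_{i\in I}$ amounts to the following data: for each $i\in I$, a function $\hat f^i\colon K\to J$ together with morphisms $f^i_k\colon X_k\to A_{i,\hat f^i(k)}$ in $\ck$. Reindexing by $k\in K$, this is equivalent to specifying, for each $k\in K$, a function $\varphi_k\colon I\to J$ (namely $\varphi_k(i)=\hat f^i(k)$) together with a cone $(X_k\to A_{i,\varphi_k(i)})_{i\in I}$ in $\ck$. This reformulation is the heart of the proof.

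For the direction from multi-products to products, I assume that for every $\varphi\colon I\to J$ the multi-product in $\ck$ is given by cones $(p^{\varphi,\alpha}_i\colon P_{\varphi,\alpha}\to A_{i,\varphi(i)})_{i\in I}$ indexed by $\alpha\in M_\varphi$. I then form $P=(P_{\varphi,\alpha})_{(\varphi,\alpha)}$ in $\Sigma\ck$, with projections $\pi^i\colon P\to A_i$ given by $\hat\pi^i(\varphi,\alpha)=\varphi(i)$ and $(\pi^i)_{(\varphi,\alpha)}=p^{\varphi,\alpha}_i$. A cone from $X$ decomposes as in the previous paragraph, and the multi-product property produces, for each $k\in K$, a unique triple $(\varphi_k,\alpha_k,X_k\to P_{\varphi_k,\alpha_k})$, which assembles into the unique factorization $X\to P$ in $\Sigma\ck$.

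For the converse, given a product $P=(P_l)_{l\in L}$ with projections $\pi^i$, I set $\psi_l(i)=\hat\pi^i(l)$ and $L_\varphi=\{l\in L:\psi_l=\varphi\}$. Any cone $(g_i\colon X\to A_{i,\varphi(i)})_{i\in I}$ in $\ck$, viewed (with $X$ as the one-element collection) as a cone in $\Sigma\ck$, factors uniquely through $P$; compatibility with the projections forces the chosen index to lie in $L_\varphi$. Hence $\{(P_l\to A_{i,\varphi(i)})_{i\in I}:l\in L_\varphi\}$ is the multi-product of $(A_{i,\varphi(i)})_{i\in I}$ in $\ck$. The main obstacle is purely the notational bookkeeping of the three layers of functions $\hat f^i$, $\varphi_k$, $\psi_l$; once these are aligned, both directions reduce to routine applications of the relevant universal properties.
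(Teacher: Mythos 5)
Your proof is correct and takes essentially the same approach as the paper's: the paper also builds the product as the collection indexed by all pairs $(\varphi,k)$ with $k$ ranging over the chosen multi-product index set $K_\varphi$, and conversely extracts the multi-product of $(A_{i\varphi(i)})_{i\in I}$ as the subfamily of product components whose projection index functions select $\varphi$. The only (immaterial) difference is that the paper reduces the verification of the universal property to cones from single objects of $\ck$, whereas you carry the general collection $(X_k)_{k\in K}$ through the whole argument.
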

\begin{proof}
(1) We prove the necessity: Let $(C^k)_{k\in K}$ be a product $\prod\limits_{i\in I} (A_{ij})_{j\in J}$ in $\Sigma\ck$ with projections
$$
\pi_i:(C^k)_{k\in K}\to (A_{ij})_{j\in J}
$$
for $i\in I$. Given $\varphi:I\to J$ denote by $K_\varphi\subseteq K$ the set of all $k$ with $\varphi(i)=\hat{\pi}_i(k)$ for all $i\in I$.
This defines cones in $\ck$ as follows: for every $k\in K_\varphi$ we have the $k$-component of $\pi_i$
$$
\pi^k_i:C^k\to A_{i\varphi(i)}\quad\quad (i\in I).
$$
The set of these cones (indexed by $K_\varphi$) is a multi-product of $A_{i\varphi(i)}$ for ${i\in I}$.

Indeed, every cone in $\ck$
$$
f_i:X\to A_{i\varphi(i)}\quad\quad (i\in I)
$$
yields a cone $g_i:X\to (A_{ij})_{j\in J}$ in $\Sigma\ck$ with $\hat{g}_i$ choosing $\varphi(i)$ and the unique component of $g_i$ being $f_i$. The fact that $(g_i)_{i\in I}$ has a unique factorization through $(\pi_i)_{i\in I}$ in $\Sigma\ck$ clearly implies that there exists a unique $k\in K_\varphi$ and a unique factorization of $(f_i)_{i\in I}$ through $(\pi^k_i)_{i\in I}$ in $\ck$.

(2) Conversely, if all the above multi-products exist in $\ck$, then $\prod\limits_{i\in I} (A_{ij})_{j\in J}$ is given in $\Sigma\ck$ as follows. For every $\varphi:I\to J$ choose a multi-product, indexed by a set $K_\varphi$, as follows:
$$
\pi^k_i:C^k\to A_{i\varphi(i)}\quad\quad (i\in I,k\in K_\varphi).
$$
This defines a morphism of $\Sigma\ck$ for every $i\in I$: put $K=\coprod\limits_{\varphi:I\to J} K_\varphi$ and let
$$
\pi_i:(C^k)_{k\in K}\to (A_{ij})_{j\in J}
$$ 
be defined by
$$
\hat{\pi}_i:K\to J \text{ and }\hat{\pi}_i(k)=\varphi(i)
$$
for all $k\in K_\varphi$ with components
$$
\pi^k_i:C^k\to A_{i\varphi(i)}$$
as above.

For every object $D$ of $\ck$ and every cone $f_i:D\to (A_{ij})_{j\in J}$, $i\in I$, in $\Sigma\ck$ we prove that there is a unique factorization through $(\pi_i)_{i\in I}$. It then follows immediately that the same holds for families of more that one object as well.

The given cone chooses for every $i\in I$ a unique $j\in J$, let $\varphi:I\to J$ denote the resulting function. And $f_i$ has a component $g_i:D\to A_{i\varphi(i)}$ in $\ck$, $i\in I$. There exists a unique $k\in K_\varphi$ and a unique factorization of $(g_i)_{i\in I}$ through $(\pi^k_i)_{k\in K_\varphi}$ in $\ck$. This clearly implies that $(f_i)$ factorizes uniquely through $(\pi_i)$ in $\Sigma\ck$.
\end{proof}


\begin{remark}\label{multi0}
Recall that a functor $F:\ca\to\cb$ is a \textit{left multi-adjoint} if, for every object $B\in\cb$, the category $F/ B$ has a multi-terminal set of objects. This is a set $f_i:FA_i\to B$, $i\in I$ such that for every $f:FA\to B$ there is a unique $i\in I$ and a morphism
$g: A\to A_i$ such that $f=f_i\cdot F(g)$, and moreover $g$ is also unique.
\end{remark}

\begin{definition}\label{multi}
A category $\ck$ is called \textit{cartesian multi-closed} if it has finite products and each endofunctor  $A\times -$ is a left 
multi-adjoint.
\end{definition}

Explicitly, this means that for every pair of objects $A$ and $B$ there exists a set of objects $[A,B]_i$ $(i\in I)$ and morphisms
$$
\ev_i \colon A \times [A,B]_i \to B \quad (i\in I)
$$
multi-universal in the expected sense:
For every morphism $c\colon A \times C \to B$ there exists a unique $i\in I$ for which some morphism $\bar c \colon C \to [A,B]_i$ makes the following triangle
 $$
\xymatrix@C=4pc@R=4pc{
A\times  C \ar[d]_{A\times \bar c} \ar [r]^{c} & B\\
A\times [A,B]_i \ar[ur]_{\ev_i}&
}
$$
commutative, and moreover, $\bar c$ is also unique. 

\begin{exs}\label{multi1e}
The following categories are cartesian multi-closed:

(a) Every cartesian closed category. 

(b) Every category with binary products satisfying $A\times B\cong A+B$ (e.g. every additive category). Indeed, given $A,B$, the family $[f,\id_B]:A\times B\cong A+B\to B$ indexed by all $f:A\to B$ is obviously multi-universal: every morphism $c:A+C\to B$ has the form $c=[f,\bar{c}]$ for a unique pair $f:A\to B$ and $\bar{c}:C\to B$.

(c) Every dual of an extensive category (e.g. $\ck=\Set^{\op}$). Indeed, first observe that every object $B$ has only a set of decompositions $B = B_1 + B_2$ in $\ck^{\op}$ (up to isomorphism). In fact, every such decomposition yields a morphism
in $\ck^{\op}(B,B+B)$ taking $B_1$ to the left-hand summand and $B_2$ to the right-hand one. And different decompositions correspond to different morphisms.

Now, given $A,B$, the family $f+\id_{B_2}:B\to A+B_2$ in $\ck^{\op}$ indexed by all decompositions $B=B_1 + B_2$ and all $f:B_1\to A$ is obviously multi-universal: every morphism $c:B\to A+C$ in $\ck^{\op}$ has the form $c=[f,\bar{c}]$ in $\ck$ for unique $f:B_1\to A$ and 
$\bar{c}:B_2\to C$.
\end{exs}

\begin{remark}\label{multi1}
Let $\ck$ have a \textit{strict initial object} $0$, i.e., if a morphism $X\to 0$ exists, then $X$ is initial. Then $\ck$ is cartesian closed iff it is cartesian 
multi-closed. Indeed, consider the above triangle for $C=0$: since $A\times C\cong 0$, we have a unique $c$, but for every $i\in I$ the unique $\bar{c}:0\to [A,B]_i$ makes the triangle commutative. Thus, $I$ is a singleton set.

This implies that for example the category $\Top$ of topological spaces is not cartesian multi-closed.
\end{remark}

\begin{theorem}\label{multi2}
Let $\ck$ have finite products. Then $\Sigma \ck$ is cartesian closed iff $\ck$ is cartesian multi-closed and has multi-products.
\end{theorem}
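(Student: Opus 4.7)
The plan is to analyze directly what an exponential $[A,B]$ in $\Sigma\ck$ must look like by decomposing $\Sigma\ck(C\times A,B)$ for $A=(A_i)_{i\in I}$, $B=(B_j)_{j\in J}$ and test object $C=(C_k)_{k\in K}$. Since $\ck$ has finite products, $C\times A$ is the collection $(C_k\times A_i)_{(k,i)\in K\times I}$, so a morphism $C\times A\to B$ consists, for each $k\in K$, of a function $\varphi_k\colon I\to J$ together with morphisms $C_k\times A_i\to B_{\varphi_k(i)}$ in $\ck$ for every $i$. Under cartesian multi-closedness of $\ck$, each such component transposes to a unique pair $(\lambda_k(i),\bar f_{k,i})$ with $\lambda_k(i)\in L_{i,\varphi_k(i)}$ and $\bar f_{k,i}\colon C_k\to[A_i,B_{\varphi_k(i)}]_{\lambda_k(i)}$; and under multi-products in $\ck$, the resulting cone $(\bar f_{k,i})_{i\in I}$ transposes uniquely to a pair $(m_k,g_k)$ with $g_k\colon C_k\to H^{\varphi_k,\lambda_k}_{m_k}$, where $(H^{\varphi,\lambda}_m)_{m\in M_{\varphi,\lambda}}$ is a chosen multi-product of $([A_i,B_{\varphi(i)}]_{\lambda(i)})_{i\in I}$ in $\ck$. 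The index set of $[A,B]$ will be the set of triples $(\varphi,\lambda,m)$.

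For the direction $\Leftarrow$, assume $\ck$ is cartesian multi-closed and has multi-products. Finite products of $\Sigma\ck$ follow from those of $\ck$: the product of $(A_i)_I$ and $(B_j)_J$ is $(A_i\times B_j)_{(i,j)}$, and the terminal object is $(T)$. For the internal hom, fix once and for all a multi-universal family $A_i\times[A_i,B_j]_l\to B_j$ for every $(i,j)\in I\times J$ and $l\in L_{i,j}$, and a multi-product $(H^{\varphi,\lambda}_m)_{m\in M_{\varphi,\lambda}}$ of $([A_i,B_{\varphi(i)}]_{\lambda(i)})_{i\in I}$ in $\ck$ for every $\varphi\colon I\to J$ and every $\lambda$ with $\lambda(i)\in L_{i,\varphi(i)}$. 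Define $[A,B]$ as the collection $(H^{\varphi,\lambda}_m)$ indexed by all such triples; the decomposition above yields a bijection $\Sigma\ck(C\times A,B)\cong\Sigma\ck(C,[A,B])$, natural in $C$ because each transposition step is.

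For the direction $\Rightarrow$, assume $\Sigma\ck$ is cartesian closed. To deduce cartesian multi-closedness of $\ck$, take $A,B\in\ck$ as singleton-indexed objects of $\Sigma\ck$: writing $[A,B]=(H_k)_{k\in K}$, the adjunction gives
$$
\ck(C\times A,B)\cong\Sigma\ck(C,[A,B])\cong\coprod_{k\in K}\ck(C,H_k)
$$
naturally in $C\in\ck$, which is precisely the multi-universality of the family $A\times H_k\to B$ obtained by transposing $\id_{H_k}$. To extract a multi-product of a given family $(X_i)_{i\in I}$ in $\ck$, apply cartesian closedness to $\alpha=\coprod_{i\in I}T$ and $\beta=(X_i)_{i\in I}$ in $\Sigma\ck$: since $C\times-$ preserves coproducts,
$$
\Sigma\ck(C,[\alpha,\beta])\cong\Sigma\ck\bigl(\coprod_{i}C,\beta\bigr)\cong\prod_{i\in I}\coprod_{j\in I}\ck(C,X_j)\cong\coprod_{\varphi\colon I\to I}\prod_{i\in I}\ck(C,X_{\varphi(i)})
$$
for every $C\in\ck$. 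Writing $[\alpha,\beta]=(H_k)_k$ and comparing with the expansion $\coprod_k\ck(C,H_k)$, Yoneda identifies the subfamily of components of $[\alpha,\beta]$ lying over the summand $\varphi=\id_I$ as a multi-product of $(X_i)_{i\in I}$ in $\ck$.

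The main difficulty is the bookkeeping of the first paragraph: the index set of $[A,B]$ stacks three layers of choices (the function $\varphi$, the multi-adjoint label $\lambda$, and the multi-product index $m$), and the naturality of the composite bijection must be unpacked with care. Once this is in place the reverse direction is routine: it amounts to choosing test objects $\alpha,\beta$ in $\Sigma\ck$ whose exponential splits the two desired pieces of structure on $\ck$.
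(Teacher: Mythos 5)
Your proof is correct and takes essentially the same route as the paper's: the paper also extracts cartesian multi-closedness from exponentials of singleton objects and multi-products from the exponential $[(1)_{i\in I},(X_j)_{j\in I}]$ (your $[\coprod_i T,\beta]$), and in the converse direction builds $[A,B]$ from multi-universal families combined with multi-products. The only difference is presentational: you flatten the triple-layered index $(\varphi,\lambda,m)$ into one construction, where the paper first treats $A\in\ck$ and then obtains the general case via $[(A_l)_l,B]=\prod_l[A_l,B]$ and its Lemma 2.6 on products in $\Sigma\ck$.
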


\begin{proof}
(1) Let $\Sigma \ck$ be cartesian closed. Given objects $A$, $B$ of $\ck$ we denote the corresponding exponential object in $\Sigma \ck$ by
$$
[A,B] = ([A,B]_i)_{i\in I}
$$
together with the counit having components
$$
\ev_i: A\times [A,B]_i  \to B \quad (i\in I)\,.
$$

(1a) We prove that $\ck$ is cartesian multi-closed.
It is our task to prove that for every morphism $c\colon A \times C \to B$ in $\ck$ there exists a unique $i\in I$ with a commutative triangle
 $$
\xymatrix@C=4pc@R=4pc{
A\times  C \ar[d]_{A\times \bar c} \ar [r]^{c} & B\\
A\times [A,B]_i \ar[ur]_{\ev_i}&
}
$$
for some $\bar c \colon C\to [A,B]_i$, and moreover $\bar c$ is unique. Indeed, $c\colon A \times C\to B$ is a morphism of $\Sigma \ck$, thus we have a unique $\tilde c \colon C \to ([A,B]_i)_{i\in I}$ with
$$
c=(\ev_i)_{i\in I}  \cdot (A \times \tilde c)
$$
in $\Sigma \ck$. This morphism $\tilde c$ is given by choosing $i_0 \in I$  and a morphism $\bar c \colon C \to [A,B]_{i_0}$ in $\ck$ with $c =\ev_{i_0}\cdot (A\times \bar c)$.

Conversely, suppose the triangle  above commutes, then we must prove $i=i_0$ and $\bar c$ is as above. The choice of $i$  and $\bar c$ in that triangle defines a morphism $\tilde c\colon C \to ([A,B]_i)_{i\in I}$ with $c =\ev_i\cdot (A\times \tilde  c)$, and since $\tilde c$ is unique, the proof of uniqueness of $i$ and $\bar c$ follows. 

(1b) $\ck$ has multi-products. Indeed, for every collection $A_i$ $(i\in I)$ in $\ck$ let $B=(1)_{i\in I}$ be the object of $\Sigma\ck$ with components $1$, terminal in $\ck$. Since in $\Sigma\ck$ we have $B=\coprod\limits_{i\in I} 1$ and $\Sigma\ck$ has the exponential object $[B,(A_j)_{j\in I}]$, it has the following product
$$
\prod\limits_{i\in I} [1,(A_j)_{j\in I}]=\prod\limits_{i\in I} (A_j)_{j\in I}.
$$
By Lemma \ref{mproduct} for every $\varphi:I\to I$ a multi-product of $A_{\varphi(i)}$ $(i\in I)$ exists in $\ck$. This proves our claim: put $\varphi=\id_I$.

(2) Let $\ck$ be cartesian multi-closed and have multi-products. Then $\ck$ has for every pair $A$, $B$ of objects a multi-universal collection of morphism $\ev^i \colon A \times [A,B]^i \to B$, $(i\in I)$ w.r.t $A\times -$. We first prove that $\Sigma\ck$ has exponential objects $[A,B]$ for arbitrary objects 
$A\in \ck$ and $(B_j)_{j\in J}\in \Sigma \ck$. The exponential object $[A, (B_j)_{j\in J}]$ in $\Sigma \ck$ is formed as follows: let for every $j\in J$ a multi-universal collection be given
$$
\ev_j^i \colon A \times [A,B]_j^i \to B_j \quad (i\in I_j)\,.
$$
This defines a morphism
$$
\ev \colon A \times ([A,B]_j^i)_{j\in J, i\in I_j} \to (B_j)_{j\in J}
$$
whose index function $\hat{ev}$ is $(j,i)\mapsto j$ and whose components are 
$\ev_j^i$. We prove that in $\Sigma \ck$ we have
$$
[ A, (B_j)_{j\in J} ] = ([A,B]_j^i)_{j\in J, i\in I_j}
$$
with the above counit $\ev$. Indeed, to give a morphism $h\colon (D_k)_{k\in K} \to ([A,B]_j^i)$ means to choose for every 
$k\in K$

(a) indices $j\in J$ and $i\in I_j$, and

(b) a morphism $h_k \colon D_k \to [A,B]_j^i$\,.

And to give  a morphism $\bar h \colon A \times (D_k)_{k\in K} \to (B_j) _{j\in J}$ means to choose for every $k\in K$ an index $j\in J$ and  a morphism $\bar h_k \colon A \times  D_k\to B_j$, where the latter is  equivalent to choosing $i\in I_j$ and a morphism from $D_k$ to $[A,B]_j^i$, due to the universal property of $\ev_j^i$.

Turning to a general pair of objects $A=(A_l)_{l\in L}$ and $(B_j)_{j\in J}$, the existence of exponential objects $[A_l, (B_j)_{j\in J}]$ and the fact that $(A_l)$ is a coproduct in $\Sigma\ck$ of $A_l$ imply
$$
[(A_l)_{l\in L}, (B_j)_{j\in J}] = \prod_{l\in L} [ A_l, (B_j)_{j\in J}]\,;
$$
the above product exists because $\ck$ has multi-products (see Lemma \ref{mproduct}).
\end{proof}




\begin{corollary}\label{sigma1}
$\Sigma\ck$ is cartesian closed for every complete, additive category.    
\end{corollary}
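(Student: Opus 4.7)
The plan is to derive the corollary as a direct consequence of Theorem \ref{multi2}, whose hypotheses reduce to two items: $\ck$ should be cartesian multi-closed and should admit multi-products. Both verifications are essentially free for the class of categories considered.

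First I would dispatch the multi-products. Since $\ck$ is complete by assumption, every small family of objects has an honest product in $\ck$. A product is a multi-product indexed by a one-element set, so $\ck$ trivially has all (small) multi-products. No additivity is needed here.

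Next I would verify cartesian multi-closedness. In an additive category, finite products and finite coproducts coincide as biproducts, so for any pair of objects $A,B$ we have a natural isomorphism $A\times B\cong A+B$. This is precisely the hypothesis of Example \ref{multi1e}(b), which exhibits, for each pair $A,B$, the multi-universal family $[f,\id_B]\colon A\times B\cong A+B\to B$ indexed by all $f\colon A\to B$ witnessing the multi-adjointness of $A\times -$. Hence $\ck$ is cartesian multi-closed.

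Combining the two bullets with Theorem \ref{multi2} yields that $\Sigma\ck$ is cartesian closed. There is no real obstacle; the content of the corollary is entirely packaged into Example \ref{multi1e}(b) and the earlier characterization, so the proof is just a two-line invocation.
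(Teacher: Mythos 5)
Your proposal is correct and is exactly the argument the paper intends: completeness supplies finite products and (one-element) multi-products, Example \ref{multi1e}(b) supplies cartesian multi-closedness via the biproduct isomorphism $A\times B\cong A+B$, and Theorem \ref{multi2} then gives the conclusion. The paper leaves this corollary unproved precisely because it is this immediate combination, so there is nothing to add.
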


\begin{definition}\label{cmc}
A category $\ck$ is called \textit{locally cartesian multi-closed} if every slice $\ck /A$ is cartesian multi-closed.
\end{definition} 

\begin{theorem}\label{sigma2a}
Let $\ck$ be a complete category. Then $\Sigma\ck$ is locally cartesian closed iff $\ck$ is locally cartesian multi-closed.
\end{theorem}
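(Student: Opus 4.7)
The plan is to compute the slices of $\Sigma\ck$ explicitly and then apply Theorem~\ref{multi2} slicewise. The central observation is the equivalence of categories
$$
(\Sigma\ck)/(A_i)_{i\in I} \;\simeq\; \prod_{i\in I}\Sigma(\ck/A_i).
$$
An object on the left is a morphism $f:(D_k)_{k\in K}\to (A_i)_{i\in I}$, i.e., an index function $\hat f:K\to I$ together with components $f_k:D_k\to A_{\hat f(k)}$ in $\ck$. Partitioning $K = \coprod_{i\in I}\hat f^{-1}(i)$ turns this datum into one object of $\Sigma(\ck/A_i)$ per $i\in I$. A morphism of $\Sigma\ck$ over $(A_i)$ must respect the index function, so its underlying function $\hat h:K\to L$ sends $\hat f^{-1}(i)$ into $\hat g^{-1}(i)$ and the components lie in the appropriate slices; this matches morphisms on the right-hand side, and the equivalence is natural.

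Given this identification I would invoke two standard facts. First, a product of categories $\prod_i\cd_i$ is cartesian closed iff each factor $\cd_i$ is, since terminal objects, binary products and exponentials in a product category are computed componentwise (and a cartesian closed category is automatically nonempty). Hence $(\Sigma\ck)/A$ is cartesian closed iff every $\Sigma(\ck/A_i)$ is. Letting $A=(A_i)_{i\in I}$ range over all of $\Sigma\ck$, local cartesian closedness of $\Sigma\ck$ is therefore equivalent to the single statement: $\Sigma(\ck/B)$ is cartesian closed for every $B\in\ck$.

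Second, since $\ck$ is complete, every slice $\ck/B$ is complete; in particular it has finite products and, trivially, multi-products (as they coincide with products). Theorem~\ref{multi2} then gives that $\Sigma(\ck/B)$ is cartesian closed iff $\ck/B$ is cartesian multi-closed, which by Definition~\ref{cmc} is precisely the local cartesian multi-closedness of $\ck$. Chaining the equivalences yields the theorem.

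The only substantive step is the slice decomposition, where the main care is the index-function bookkeeping: one must verify both that the passage $f\mapsto ((D_k,f_k)_{k\in\hat f^{-1}(i)})_{i\in I}$ is essentially surjective and that morphisms correspond bijectively, naturally in the two variables. Everything else — the behaviour of cartesian closedness under products of categories and the reduction to Theorem~\ref{multi2} — is routine.
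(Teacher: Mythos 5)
Your proposal is correct and takes essentially the same route as the paper's proof: the identification $(\Sigma\ck)/(A_i)_{i\in I}\simeq\prod_{i\in I}\Sigma(\ck/A_i)$ followed by a slicewise application of Theorem~\ref{multi2}, using that each slice $\ck/A_i$ of a complete category has (multi-)products. The only cosmetic difference is that for the converse the paper specializes $A$ to a single object of $\ck$, so that $(\Sigma\ck)/A\cong\Sigma(\ck/A)$ directly, instead of invoking the (true, but slightly less standard) fact that each nonempty factor of a cartesian closed product of categories is cartesian closed.
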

\begin{proof}
Let $\ck$ be locally cartesian multi-closed. For every object $A=(A_i)_{i\in I}$ of $\Sigma\ck$ we verify that $(\Sigma\ck)/A$ is cartesian closed. This category is isomorphic to the product of categories $\Sigma(\ck/A_i)$, $i\in I$. Indeed, consider the functor which to every
object $(B_j)_{j\in J}\to A$ of $(\Sigma\ck)/A$ assigns the $I$-tuple, where for $i\in I$ the object of $\ck/A_i$ is given as follows: the indexing set is $\hat{b}^{-1}(i)\subseteq J$, and for every $j\in\hat{b}^{-1}(i)$ the corresponding morphism is $b_j:B_j\to A_i$.

Since $\ck/A_i$ has products, from its cartesian multi-closedness follows that $\Sigma(\ck/A_i)$ is cartesian closed (see Theorem \ref{multi2}). And a product of cartesian closed categories is cartesian closed (with exponential objects defined componentwise). 

Conversely, let $\Sigma\ck$ be locally cartesian closed. For every object $A$ of $\ck$ the category $\Sigma (\ck/A)$ is isomorphic to 
$(\Sigma \ck)/A$ and is thus cartesian closed. Therefore $\ck /A$ is cartesian multi-closed (see Theorem \ref{multi2}), proving that 
$\ck$ is locally cartesian multi-closed.
\end{proof}

\begin{corollary}\label{sigma2}
Let $\ck$ be a complete category with a strict initial object. Then $\Sigma\ck$ is (locally) cartesian closed if and only if $\ck$ is
(locally) cartesian closed.
\end{corollary}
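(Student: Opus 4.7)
The plan is to read off both equivalences directly from the previously established Theorem \ref{multi2} (for the cc case) and Theorem \ref{sigma2a} (for the lcc case), and then invoke Remark \ref{multi1} to delete the ``multi-'' prefix under the strict initial object hypothesis.

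For the cartesian closed equivalence, I would first note that completeness of $\ck$ supplies in particular all products, hence trivially all multi-products. Theorem \ref{multi2} therefore reduces to ``$\Sigma\ck$ is cartesian closed iff $\ck$ is cartesian multi-closed'', and Remark \ref{multi1}, applicable thanks to the strict initial object, identifies the right-hand condition with cartesian closedness of $\ck$. The forward direction of the original statement could alternatively be read off from Example \ref{multi1e}(a), but the route through Remark \ref{multi1} handles both directions at once.

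For the locally cartesian closed equivalence, Theorem \ref{sigma2a} immediately gives ``$\Sigma\ck$ is lcc iff $\ck$ is locally cartesian multi-closed'', so the task reduces to showing, for each $A\in\ck$, that $\ck/A$ is cartesian multi-closed iff cartesian closed. To apply Remark \ref{multi1} slicewise I need each slice $\ck/A$ to carry a strict initial object; this is the one step that is not a direct citation. I would verify it by observing that the unique composite $0\to A$ is initial in $\ck/A$, and that any morphism in $\ck/A$ ending there is simply a $\ck$-morphism $X\to 0$, which forces $X\cong 0$ by strictness of $0$ in $\ck$. Completeness of $\ck$ passes to each slice as usual, so there is no issue on that side either.

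No genuine obstacle is anticipated: everything is a direct appeal to Theorem \ref{multi2}, Theorem \ref{sigma2a}, and Remark \ref{multi1}, with the single auxiliary observation that strictness of the initial object descends to slices.
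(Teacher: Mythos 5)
Your proposal is correct and follows exactly the route the paper takes: the paper's entire proof is the citation of Remark \ref{multi1}, Theorem \ref{multi2} and Theorem \ref{sigma2a}. Your extra observation that strictness of the initial object descends to slices (so that Remark \ref{multi1} applies to each $\ck/A$) is a valid and welcome filling-in of a detail the paper leaves implicit.
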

This follows from Remark \ref{multi1}, Theorem \ref{multi2} and Theorem \ref{sigma2a}. 


\begin{exs}\label{sigma4}
(i) For the topos $\Set_f$ of finite sets, $\Sigma\Set_f$ is not cartesian closed: $\Set_f$ does not have multi-products (see Remark \ref{sigma}(1)). Analogously, $\Sigma\Ab_f$ is not cartesian closed where $\Ab_f$ is the category of finite abelian groups.

(ii) In contrast, $\Sigma\Set$ is locally cartesian closed, indeed a topos, see Example \ref{sigma0}(c). 

(iii) $\Sigma\Ab$ is cartesian closed by Corollary \ref{sigma1}.

(iv) The category $\Cat$ of small categories is cartesian closed but not locally so. Since it is complete and has a strict initial object, we conclude that $\Sigma\Cat$ is cartesian closed but not locally so.
\end{exs}

\begin{problem}\label{sigma5} 
{
\em
Is $\Sigma\Ab$ locally cartesian closed?
}
\end{problem}

\begin{definition}\label{multi4}
By a \textit{multi-topos} is meant a finitely complete, cartesian multi-closed category with a subobject classifier.
\end{definition}

\begin{theorem}\label{multi5}
For a complete category $\ck$ the completion $\Sigma\ck$ is a topos iff $\ck$ is a multi-topos.
\end{theorem}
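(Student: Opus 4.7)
The theorem decomposes into three characterizations corresponding to the defining properties of a topos, two of which are already in hand. By Remark~\ref{sigma}(1), $\Sigma\ck$ is finitely complete iff $\ck$ is finitely multi-complete, which is automatic when $\ck$ is complete. By Theorem~\ref{multi2}, $\Sigma\ck$ is cartesian closed iff $\ck$ is cartesian multi-closed and has multi-products, and a complete $\ck$ trivially has multi-products (any product yields a singleton multi-product). Thus the remaining task is to prove that $\Sigma\ck$ has a subobject classifier iff $\ck$ does.

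For the construction direction, given a classifier $t\colon 1 \to \Omega_\ck$ in $\ck$, I would define $\Omega \in \Sigma\ck$ as the two-component collection $(1, \Omega_\ck)$ indexed by $\{0,1\}$, with $\top\colon (1) \to \Omega$ picking out index $1$ via component $t$. A monomorphism $(S_i)_{i\in I'} \hookrightarrow (A_i)_{i \in I}$ (described as in Remark~\ref{sigma}(3) with $I' \subseteq I$) receives a classifying map $\chi$ whose index function sends $I'$ to $1$ and $I\setminus I'$ to $0$, with components equal to the $\ck$-classifiers of $S_i \hookrightarrow A_i$ on $I'$ and the unique map to $1$ elsewhere. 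The pullback description of Remark~\ref{pullback} reduces the universal property of $\Omega$ in $\Sigma\ck$ to that of $\Omega_\ck$ in $\ck$ (componentwise) together with the bookkeeping at the index level.

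For the extraction direction, given $\Omega = (\Omega_k)_{k\in K}$ and $\top\colon (1) \to \Omega$, set $k^* = \hat\top(\ast)$ and let $t^*\colon 1 \to \Omega_{k^*}$ be the sole component of $\top$; by Remark~\ref{sigma}(3), $t^*$ is monic in $\ck$. I would show $(\Omega_{k^*}, t^*)$ is a subobject classifier of $\ck$. For a monomorphism $m\colon S \hookrightarrow X$ in $\ck$ (which is also monic in $\Sigma\ck$), its unique classifier $\chi_m\colon X \to \Omega$ must satisfy $\hat\chi_m(\ast) = k^*$, because otherwise the pullback of $\hat\top$ along $\hat\chi_m$ in $\Set$ is empty and Remark~\ref{pullback} would make the total pullback in $\Sigma\ck$ the empty collection, contradicting the fact that $S$ has non-empty index set $\{\ast\}$. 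The sole component $\chi_m^*\colon X \to \Omega_{k^*}$ then, again by Remark~\ref{pullback}, forms a classifying pullback square in $\ck$. Uniqueness of $\chi_m^*$ descends from uniqueness of $\chi_m$, and surjectivity follows by lifting any $f\colon X \to \Omega_{k^*}$ to a morphism $X \to \Omega$ of index $k^*$ and reading off the subobject it classifies (which is necessarily non-empty, by the same index-level analysis).

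The main obstacle is the extraction direction, specifically the index-level argument forcing $\hat\chi_m(\ast) = k^*$ and its role in ensuring the correspondence is a bijection between subobjects of $X$ in $\ck$ and morphisms $X \to \Omega_{k^*}$ in $\ck$. Once this is pinned down, Remark~\ref{pullback} cleanly splits every remaining verification into an index pullback in $\Set$ and a component pullback in $\ck$, reducing the rest to routine componentwise bookkeeping.
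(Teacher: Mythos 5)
Your proposal is correct and takes essentially the same route as the paper: reduce to the subobject classifier via Theorem~\ref{multi2} and Remark~\ref{sigma}(1), construct the classifier of $\Sigma\ck$ as the two-component collection $(\Omega,1)$ with the evident classifying maps, and extract a classifier for $\ck$ from the component of $\Omega_{\Sigma\ck}$ selected by the index function of $\top$. The only point where the paper works harder than your sketch is the uniqueness of the classifying map in the forward direction, where it explicitly verifies (via a modified monomorphism $f^\ast$) that each component square of a competing classifier is a pullback in $\ck$ --- this is exactly the ``componentwise bookkeeping'' you defer.
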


\begin{proof}
(1) Let $\ck$ be a multi-topos with a subobject classifier $\varepsilon \colon 1 \to \Omega$. Then the morphism
$$
\bar{\varepsilon} \colon 1 \to (\Omega , 1) \quad \mbox{in} \quad \Sigma \ck
$$
corresponding to $\varepsilon$  is a subobject classifier of $\Sigma \ck$, which by Theorem \ref{multi2} proves that $\Sigma\ck$ is a topos.

Indeed, given a subobject $f\colon (A_i)_{i\in I} \to (B_j)_{j\in J}$ with $I\subseteq J$ (see Remark~\ref{sigma}(2)), the corresponding pullbacks in $\ck$

$$
\xymatrix@C=3pc@R=3pc{
A_{i_J}\ar[d]_{!} \ar [r]^{f_i} & B_i\ar[d]^{g_i}\\
1 \ar[r]_{\varepsilon}& \Omega
}\quad{(i\in I)}  
$$ 
yield the following morphism
$$
g\colon (B_j)_{j\in J} \to (\Omega, 1).
$$
The function $\hat g$ assign to every $i\in I$ the left-hand object of $(\Omega,1)$, (with the component $g_i$ above)
and to every $j\in J-I$ the right-hand one. The square below
\begin{equation}\label{ee5.1}
 \xymatrix@C=3pc@R=3pc{
A\ar[d]_{!} \ar [r]^{f} & B\ar[d]^{g}\\
1 \ar[r]_{\varepsilon}& (\Omega, 1)
}
\end{equation}
is a pullback in $\Sigma\ck$. This follows easily from Remark \ref{pullback}.


It remains to prove that the pulback  \eqref{ee5.1} determines $g$ uniquely. Suppose $g' \colon B\to (\Omega, 1)$ makes the corresponding square a pullback in $\Sigma \ck$, too. For every $j\in J$ the commutativity of that square proves that
$\hat{g'}(j)$ is the left-hand object, $\Omega$, iff $j\in J$. Thus, $\hat{g}=\hat{g'}$. For every $i\in I$ we are to prove that the following square
\begin{equation}\label{ee5.2}
\xymatrix@C=3pc@R=3pc{
A_i\ar[d]_{!} \ar [r]^{f_i} & B_i\ar[d]^{g'_i}\\
1 \ar[r]_{ \varepsilon}& \Omega
}
\end{equation}
is a pullback in $\ck$ -- then $g_i=g'_i$ and the proof is complete. Indeed, given a commutative square in $\ck$
$$
\xymatrix@C=3pc@R=3pc{
D\ar[d]_{!} \ar [r]^{d} & B_i\ar[d]^{g'_i}\\
1 \ar[r]_{ \varepsilon}& \Omega
}
$$
define $f^\ast \colon A\to B$ by $\hat f^\ast =\hat f$, $f^\ast_i =d$ and $f^\ast_{i'} = f_{i'}$ for all $i'\in I -\{i\}$. 
Then the last square and the pullback \eqref{ee5.1} with $g'$ in place of $g$ imply that the following square
$$
\xymatrix@C=3pc@R=3pc{
A\ar[d]_{!} \ar [r]^{f^\ast} & B\ar[d]^{g'}\\
1 \ar[r]_{ \bar\varepsilon}& (\Omega, 1)
}
$$
commutes in $\Sigma \ck$. Thus, it factorizes uniquely through the modified square \eqref{ee5.1}, which proves that $d$ factorizes uniquely through \eqref{ee5.2}, as required.

(2) Let $\Sigma \ck$ be a topos. By Theorem \ref{multi2},
\ $\ck$ is cartesian multi-closed. Denote by $\bar\varepsilon \colon 1 \to (\Omega_t)_{t\in T}$ the subobject classifier of $\Sigma\ck$.
Let $t_0 \in T$ be the element given by the indexing function of $\hat \varepsilon$, and let
$$ \varepsilon \colon 1 \to \Omega_{t_0}
$$
be the unique component of $\bar \varepsilon$. For every subobject $f\colon A\to B$ in $\ck$, since $f$ is also monic in $\Sigma \ck$, we have a morphism $g\colon B\to (\Omega_t)_{t\in T}$ forming  a pullback as follows:
$$
\xymatrix@C=3pc@R=3pc{
A\ar[d]_{!} \ar [r]^{f} & B\ar[d]^{g}\\
1 \ar[r]_{ \varepsilon}& (\Omega_t)_{t\in T}
}
$$
Thus $\hat g$ chooses $t_0$ (since $\widehat{\varepsilon \cdot !}$ does) and the unique component $g_0 \colon B\to \Omega_{t_0}$ defines a pullback in $\ck$ as follows:
$$
\xymatrix@C=3pc@R=3pc{
A\ar[d]_{!} \ar [r]^{f} & B\ar[d]^{g_0}\\
1 \ar[r]_{ \varepsilon}& \Omega_{t_0}
}
$$
Conversely, given $g_0$ making the last square a pullback in $\ck$, then the corresponding morphism $g\colon B\to (\Omega_t)_{t\in T}$ makes the above square a pullback. Therefore, $g_0$ is unique. This proves that $\varepsilon \colon 1 \to \Omega_{t_0}$ is a subobject classifier in $\ck$.
\end{proof}

\begin{corollary}\label{multi6} 
For a complete category $\ck$ with a strict initial object $\Sigma\ck$ is a topos if and only if $\ck$ is a topos.
\end{corollary}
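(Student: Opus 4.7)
The plan is to deduce this directly from Theorem \ref{multi5} together with Remark \ref{multi1}; both directions will fall out without any genuine new work, so there is no real obstacle here, just a short synthesis.

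First I would recall that, by Definition \ref{multi4}, a multi-topos is a finitely complete, cartesian multi-closed category equipped with a subobject classifier. By Theorem \ref{multi5}, under the standing hypothesis that $\ck$ is complete, $\Sigma\ck$ is a topos if and only if $\ck$ is a multi-topos. So the corollary reduces to the assertion that, under the additional hypothesis that $\ck$ has a strict initial object, being a multi-topos coincides with being a topos.

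For this last equivalence I would simply invoke Remark \ref{multi1}, which states that a category with a strict initial object is cartesian multi-closed if and only if it is cartesian closed. Thus a complete $\ck$ with strict initial object is a multi-topos precisely when it is finitely complete, cartesian closed, and carries a subobject classifier, i.e., precisely when it is a topos. Combining this with Theorem \ref{multi5} gives the desired biconditional.

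The only step that requires a word of care is checking that one may legitimately apply Remark \ref{multi1} inside the statement of Theorem \ref{multi5}: the remark needs $\ck$ to have finite products, which follows from completeness of $\ck$, and a strict initial object, which is given by hypothesis. Hence both directions chain cleanly, yielding the corollary as a two-line consequence of the preceding results.
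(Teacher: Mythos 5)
Your proposal is correct and follows exactly the route the paper takes: the paper's proof of this corollary is the one-line remark that it follows from Theorem \ref{multi5} together with Remark \ref{multi1}. Your added check that Remark \ref{multi1} applies (finite products from completeness, strict initial object by hypothesis) is exactly the implicit content of that one-liner.
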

This follows from the above Theorem and Remark \ref{multi1}.

However, we have seen in Example \ref{sigma4}(i) that for the topos $\Set_f$ the category $\Sigma\Set_f$ is not even cartesian closed.
 
\section{$\cp\ck$ is often locally cartesian closed}
We now turn to the free completion $\cp\ck$ of a category $\ck$ under colimits. In the present section we concentrate on the question whether $\cp\ck$ is (locally) cartesian closed, in the next ones we ask whether it is a topos and study the (co)wellpoweredness of it.

Recall the concept of a free completion
$$
\cp \ck
$$
under colimits: the category $\cp \ck$ is cocomplete, and there is a full embedding $E\colon \ck \to \cp \ck$ such that every functor $F\colon \ck \to \cl$ with $\cl$ cocomplete has an extension to a colimit-preserving functor $\overline F \colon \cp \ck \to \cl$, unique up to natural isomorphism. 

\begin{example}\label{completion}
(a) $\Set$ is the free colimit completion of the terminal category.

(b) $\Set^\to$ is the free colimit completion of the two-element chain.

(c) For small categories $\cp \ck$ is the presheaf category $[\ck^{\op},\Set]$ and $E$ is the Yoneda embeddding.

(d) For general categories $\cp\ck$ can be described as the full subcategory of $[\ck^{\op}, \Set]$ on all \textit{small functors}, i.e., small colimits of hom-functors (see \cite{U} 2.29). And $E$ is the codomain restriction of the Yoneda embedding, we denote it by 
$Y:\ck\to\cp\ck$.

(e) $\cp(\Set^{\op})$ is the category of all accessible set functors. Indeed, for every locally presentable category $\cl$ a functor into $\Set$ is accessible iff it is a small presheaf on $\cl^{\op}$ (see \cite{DL}). Thus $\cp(\cl^{op})$ is the category of all accessible set-valued functors on $\cl$.
\end{example}

The completions $\cp\ck$ and $\Sigma\ck$ are closely related: the former one is an exact completion of the latter. Recall that a category is \textit{exact} if it has finite limits, regular factorizations with regular epimorphisms stable under pullbacks, and effective equivalence relations. The \textit{exact completion} of a finitely complete category $\ck$, see \cite{CC}, is an exact category
$$
\ck_{\ex}
$$
with a full embedding $E:
\ck\to\ck_{\ex}$ such that every functor $F:\ck\to\cl$ with $\cl$ exact has an extension to an exact functor $\bar{F}:\ck_{\ex}\to\cl$ unique up to natural isomorphism. The following is Lemma 3 in \cite{R}.

\begin{lemma}\label{completion1}
For every finitely complete category $\ck$ we have
$$
\cp\ck=(\Sigma\ck)_{\ex}.
$$
\end{lemma}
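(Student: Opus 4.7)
My plan is to identify $\cp\ck$ with the exact completion of $\Sigma\ck$ by verifying the four conditions that characterize exact completions of lex categories: (i) $\cp\ck$ is exact; (ii) there is a full lex embedding $\iota\colon\Sigma\ck\hookrightarrow\cp\ck$; (iii) the image of $\iota$ consists of regular projectives; and (iv) every object of $\cp\ck$ is a regular quotient of an object in $\iota(\Sigma\ck)$. These together force $\cp\ck\simeq(\Sigma\ck)_{\ex}$.

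For (ii), since $\ck$ is finitely complete it is a fortiori finitely multi-complete (singleton multi-limits), so $\Sigma\ck$ is finitely complete by Remark~\ref{sigma}(1) and $(\Sigma\ck)_{\ex}$ makes sense. I would define $\iota$ as the coproduct-preserving extension of the Yoneda embedding $Y\colon\ck\to\cp\ck$, sending $(A_i)_{i\in I}$ to $\coprod_i Y(A_i)$. Full faithfulness follows from the pointwise description of colimits in $\cp\ck$ together with Yoneda:
$$
\cp\ck\Bigl(\coprod_i Y(A_i),\,\coprod_j Y(B_j)\Bigr)\;\cong\;\prod_i\coprod_j\ck(A_i,B_j)\;=\;\Sigma\ck\bigl((A_i),(B_j)\bigr).
$$
Preservation of finite limits by $\iota$ reduces, via the same pointwise description, to distributivity of finite products over small coproducts in $\Set$ together with the fact that $Y$ is lex.

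Conditions (iii) and (iv) are then short. For (iv), every small functor is by definition a small colimit of representables, hence arises as a coequalizer of a parallel pair between coproducts of representables, exhibiting it as a regular quotient of some $\iota(C)\in\iota(\Sigma\ck)$. For (iii), the hom-functor $\cp\ck(\iota(A_i)_i,-)$ is naturally isomorphic to $\prod_i\ev_{A_i}$, and regular epimorphisms in $\cp\ck$ are pointwise surjections in $\Set$ (colimits being pointwise), so this hom-functor preserves them.

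The main obstacle is (i): establishing exactness of $\cp\ck$. For small $\ck$ this is automatic since $\cp\ck=[\ck^{\op},\Set]$ is a Grothendieck topos. For general $\ck$, one first needs finite completeness of the category of small functors---a nontrivial but known fact which uses precisely the finite completeness of $\ck$ (cf.\ Day--Lack, \emph{Limits of small functors}). Once finite limits are in hand, stability of regular epimorphisms under pullback and effectiveness of equivalence relations in $\cp\ck$ reduce to the corresponding pointwise statements in $\Set$, which hold trivially. With (i) secured, conditions (ii)--(iv) assemble to identify $\cp\ck$ with $(\Sigma\ck)_{\ex}$.
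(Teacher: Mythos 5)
Your argument is correct, but be aware that the paper itself contains no proof of this lemma: it is quoted as Lemma~3 of \cite{R}, so the only comparison available is with that source, and what you have written is essentially a self-contained reconstruction of Rosick\'y's argument. Your route is the standard recognition criterion for ex/lex completions due to Carboni and Vitale \cite{CV}: an exact category equipped with a full, finite-limit-preserving embedding of a lex category whose image is a projective cover (regular projectives, every object covered by one) is the exact completion of that lex category. Two points deserve explicit care. First, this recognition criterion is itself a nontrivial theorem and is doing all the assembling work in your steps (i)--(iv); it should be cited as such rather than treated as folklore. Second, your step (i) is where the paper's own machinery would enter: finite completeness and exactness of $\cp\ck$ are obtained in the paper from Freyd's lucid functors (Lemma~\ref{pre3}, Corollary~\ref{lim1}) and the resulting closure of $\cp\ck$ under finite limits and colimits in $[\ck^{\op},\Set]$ (Lemma~\ref{pretopos}); your appeal to Day--Lack accomplishes the same, and there is no circularity in using the pretopos property here even though Lemma~\ref{pretopos} appears later in the paper's numbering, since that lemma does not depend on the present one. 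The remaining verifications --- full faithfulness and left exactness of $(A_i)_{i\in I}\mapsto\coprod_i YA_i$ (which lands exactly on the description of $\Sigma\ck$ in Example~\ref{sigma0}(e)), regular projectivity of coproducts of representables, and the covering property via coequalizer presentations of small functors --- are all sound.
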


Freyd introduced the following concepts in \cite{F}:

\begin{definition}\label{pre1}
(1) By a \textit{pre-limit} of a diagram $D$ is meant a set of cones such that every cone of $D$ factorizes through some of them
(not uniquely in general). A category is \textit{pre-complete} if every diagram has a pre-limit. Dual concept: pre-colimit and pre-cocomplete category.

(2)  A functor $F:\ck^{\op}\to\Set$ is \textit{petty} if it is a quotient of a coproduct of representable functors. It is \textit{lucid}
if, moreover, for every pair $u_1,u_2:P\to F$ of natural transformations the equalizer of $u_1,u_2$ has a petty domain.
\end{definition}

\begin{remark}\label{pre0}
(a) In the definition of a lucid functor we can restrict $P$ to representable functors. Thus, in categories where all subfunctors of representable functors are petty, any petty functor is lucid.

(b) A functor $F:\ck^{\op}\to\Set$ is petty iff its category $\el F$ of elements has a weakly initial set. That is, we have a set of objects $P_t$ $(t\in T)$ of 
$\ck$ and elements $p_t\in FP_t$ such that every element $p\in FP$ has, for some $t\in T$ and some morphism $f:P\to P_t$ of $\ck$, the form 
$p=F(f)(p_t)$.
\end{remark}

\begin{exs}\label{pre2}
(a) Complete categories are pre-complete.

(b) Small categories are pre-complete.

(c) Accessible categories are pre-cocomplete. Indeed, if $D$ is a diagram in a $\lambda$-accessible category and if $\lambda$ is chosen so that

(i) every object of $D$ is $\lambda$-presentable, and

(ii) $D$ has less than $\lambda$ morphisms,

\noindent
then the (essentially small) set of all cocones with $\lambda$-presentable codomains is a pre-colimit of $D$.
\end{exs}

The following is essentially Lemma 1 in \cite{R}.

\begin{lemma}\label{pre3}
Every lucid functor is small. If $\ck$  has finite pre-limits, then every small functor $\ck^{\op}\to\Set$ is lucid. 
\end{lemma}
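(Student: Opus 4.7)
The plan is to split the lemma into its two halves and tackle each separately. For the first half, assume $F$ is lucid. Pettiness of $F$ (via Remark \ref{pre0}(b)) produces a small weakly terminal family $(P_t, p_t)_{t \in T}$ in $\el F$, equivalently a pointwise-surjective natural transformation $p \colon P \twoheadrightarrow F$ with $P = \coprod_{t \in T} \ck(-, P_t)$. Since $[\ck^{\op}, \Set]$ is a topos, $F$ is the coequalizer of the kernel pair $K \rightrightarrows P$. Distributing the product over the coproduct gives
$$
P \times P \;=\; \coprod_{s, t \in T} \ck(-, P_s) \times \ck(-, P_t),
$$
so $K$ decomposes as $\coprod_{s, t \in T} E_{s, t}$, where each $E_{s, t}$ is the equalizer of the two maps $\ck(-, P_s) \times \ck(-, P_t) \rightrightarrows F$ obtained by projecting onto each factor and post-composing with the corresponding component of $p$. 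Applying lucidity (for which an arbitrary domain is allowed, by Remark \ref{pre0}(a)) shows each $E_{s, t}$ is petty, and hence $K$ is petty as well. Choosing a pointwise-surjective $q \colon Q \twoheadrightarrow K$ out of a small coproduct $Q$ of representables and composing with the two legs of the kernel pair, we exhibit $F = \coeq(Q \rightrightarrows P)$ as a small colimit of representables.

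For the second half, assume $F$ is small and $\ck$ has finite pre-limits. Smallness trivially gives pettiness. By Remark \ref{pre0}(a) it suffices to verify the equalizer condition for representable sources: for every $A \in \ck$ and every pair $a_1, a_2 \in F(A)$, the subfunctor $E \subseteq \ck(-, A)$ with $E(B) = \{f \colon B \to A \mid F(f)(a_1) = F(f)(a_2)\}$ must be petty. I would fix a small colimit-of-representables presentation of $F$. The equation $F(f)(a_1) = F(f)(a_2)$ then unfolds, via the standard zigzag description of equality in a colimit together with the trick of rewriting an arbitrary small colimit as a filtered colimit of finite colimits, into a small family of parallel-pair equations in $\ck$ of the form $g_1 \circ f = g_2 \circ f$. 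The finite pre-limits in $\ck$ provide, for each such parallel pair, a small pre-equalizer consisting of equalising morphisms through which every equalising $f$ factors. Taking the union of these pre-equalizers over the (small) indexing family of parallel pairs yields a small weakly terminal family in $\el E$, proving that $E$ is petty.

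The real work is concentrated in the second half: one must carefully translate the pointwise equation $F(f)(a_1) = F(f)(a_2)$, which a priori involves a proper class of potential witnessing zigzags, into a small set of concrete equations in $\ck$. The finite pre-limit hypothesis is what keeps this assembly small; without it the witnessing family would be uncontrollable. The first half, by contrast, is essentially a formal manipulation of the regular-epi/kernel-pair decomposition in the presheaf topos, with lucidity immediately delivering the needed pettiness of the kernel pair componentwise.
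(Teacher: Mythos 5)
Your first half is correct and is essentially the paper's own argument: present the petty $F$ as a quotient $\gamma\colon P\twoheadrightarrow F$ of a small coproduct of representables, note that the kernel pair of $\gamma$ is the equalizer of the two composites $P\times P\rightrightarrows F$ and hence petty by lucidity, cover that kernel pair by a small coproduct of representables, and exhibit $F$ as a small coequalizer. Your extra step of distributing $P\times P$ over the coproduct and applying lucidity summand-wise is harmless; note only that arbitrary domains are already allowed by Definition~\ref{pre1} -- Remark~\ref{pre0}(a) goes in the opposite direction, saying one may \emph{restrict} to representables.

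The second half has a genuine gap at exactly the point you yourself flag as ``the real work''. Writing $F$ as a coequalizer $\coprod_{i\in I}\ck(-,A_i)\rightrightarrows\coprod_{j\in J}\ck(-,B_j)\to F$ with component maps $\alpha_i,\beta_i$ and lifting $a_1,a_2$ to $h_1\colon A\to B_{j_1}$, $h_2\colon A\to B_{j_2}$, the condition $F(f)(a_1)=F(f)(a_2)$ for $f\colon C\to A$ does \emph{not} unfold into equations of the form $g_1\circ f=g_2\circ f$. It asserts the \emph{existence} of a finite chain of auxiliary morphisms $w_k\colon C\to A_{i_k}$ satisfying $h_1f=\alpha_{i_1}w_1$, $\beta_{i_k}w_k=\alpha_{i_{k+1}}w_{k+1}$, \dots, $\beta_{i_n}w_n=h_2f$ (up to swapping the roles of $\alpha$ and $\beta$ at each step). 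So for each chain shape $(n;i_1,\dots,i_n)$ -- of which there is only a small set -- what you need is a pre-limit of a genuinely finite diagram, namely the one whose cones are exactly the tuples $(f,w_1,\dots,w_n)$ solving this system; a pre-equalizer of a single parallel pair does not see the existentially quantified witnesses $w_k$, and your proposed reduction would therefore not close. With that correction the strategy does work: project the weakly terminal cones to their $A$-components and take the union over all chain shapes to get a weakly initial set in $\el E$. This is in substance Freyd's argument; the paper itself does not reprove it but simply cites 1.2, 1.4 and 1.9 of \cite{F} for this half.
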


\begin{proof}
(1) Every lucid functor $F$ is small. Indeed, since $F$ is petty, we have a quotient $\xymatrix@1{\gamma \colon \coprod\limits_{j\in J} \ck (-, B_j) \ar @{->>} [r] & F}$, $J$~a set. Thus $\gamma$ is the coequalizer of its kernel pair 
$$
\alpha,\beta\colon G \longrightarrow \coprod\limits_{j\in J}  \ck (-, B_j)
$$ 
which yields a petty functor $G$ (since $G$ is the equalizer of $\gamma\alpha$ and $\gamma\beta$ and $F$ is petty). Therefore, $G$ is also a quotient $\xymatrix@1{\varrho \colon\coprod\limits_{i\in I} \ck (-, A_i) \ar@{->>}[r]& G}$, $I$ a set. Consequently, $\gamma$ is the coequalizer of $\alpha \varrho$ and $\beta\varrho$, hence $F$ is small.

(2) If $\ck$  has finite pre-limits, every small functor $F:\ck^{\op}\to\Set$ is lucid. Indeed, by the usual reduction of colimits to coproducts and  coequalizers we have a coequalizer in $[\ck^{\op}, \Set]$ as follows
$$
\xymatrix@1{
\coprod\limits_{i\in I} \ck (-, A_i) \ar@<1ex> [r]^{\alpha} \ar@<-1ex> [r]_{\beta}& \coprod\limits_{j\in J} \ck (-, B_j) \ar [r]^{\quad\quad\gamma}& F
}
$$
where $I$ and $J$ are sets. By 1.4 and 1.2 in \cite{F}, both of the coproducts above are lucid, and by 1.9 in \cite{F}, applied to $\ck^{\op}$, it follows that $F$ is lucid.
\end{proof}

\begin{remark}\label{lim}
Whenever a limit of a  diagram in $\cp \ck$  exists, it is formed objectwise. That is, $\cp \ck$ is closed under existing limits in $[\ck^{\op}, \Set]$. This follows easily from Yoneda Lemma. 
\end{remark}

\begin{corollary}\label{lim1} $\cp \ck$ is (finitely) complete iff $\ck$ is (finitely) pre-complete.
\end{corollary}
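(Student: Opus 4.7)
The proof has two directions, both built on Remark~\ref{lim}: any limit that actually exists in $\cp\ck$ coincides with the objectwise limit in $[\ck^{\op},\Set]$. So the assertion ``$\cp\ck$ is (finitely) complete'' translates exactly into ``the class of small functors $\ck^{\op}\to\Set$ is closed under (finite) limits in $[\ck^{\op},\Set]$'', and I would set up both implications in these terms.

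For the direction $\cp\ck$ complete $\Rightarrow$ $\ck$ pre-complete, I take any (small, or finite) diagram $D\colon\ci\to\ck$ and form $F=\lim(Y\circ D)$ in $\cp\ck$. By Remark~\ref{lim}, $F(K)=\lim_i\ck(K,Di)$ is the set of cones $K\to D$. Since $F\in\cp\ck$ is small, writing the corresponding small colimit of representables as a coequalizer of a coproduct shows that $F$ is a quotient of a coproduct of representables, i.e., petty. By Remark~\ref{pre0}(b) its category of elements then has a weakly initial set $\{(P_t,p_t)\}_{t\in T}$; but each $p_t\in F(P_t)$ \emph{is} a cone $P_t\to D$, and weak initiality of the family says exactly that every cone from any object of $\ck$ to $D$ factors as some $p_t$ precomposed with a morphism $P\to P_t$. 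This is precisely a pre-limit of $D$.

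For the converse, assume $\ck$ is (finitely) pre-complete and take a (small, or finite) diagram $D'\colon\cj\to\cp\ck$. Form its objectwise limit $F$ in $[\ck^{\op},\Set]$; by Remark~\ref{lim} it suffices to show $F\in\cp\ck$. Each $D'(j)$ is small, hence lucid by Lemma~\ref{pre3} (using the pre-limit hypothesis on $\ck$). Appealing to Freyd's closure of the class of lucid functors under (finite) limits---this is \cite{F}\,1.9 applied to $\ck^{\op}$, the same reference invoked inside the proof of Lemma~\ref{pre3}---the functor $F$ is lucid, and hence small by the first half of Lemma~\ref{pre3}.

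The main obstacle is the converse direction: smallness of an objectwise limit of small functors is not visible from the definition of smallness alone, and one genuinely needs to route through Freyd's stronger lucidity property. Lemma~\ref{pre3} packages the translation in both directions, so once it is in hand the argument is essentially bookkeeping; but without (finite) pre-completeness of $\ck$ the implication ``small $\Rightarrow$ lucid'' fails, which is why the pre-completeness hypothesis is exactly the right one.
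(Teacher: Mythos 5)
Your proof is correct and follows essentially the same route as the paper: the forward direction extracts a pre-limit from the petty pointwise limit of $Y\circ D$ via Remark~\ref{pre0}(b), and the converse routes through Lemma~\ref{pre3} together with Freyd's closure of lucid functors under (finite) limits (the paper cites \cite{F} (1.7)/1.12 for that closure rather than 1.9, but the substance is identical).
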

\begin{proof}
Indeed, from (finite) pre-completeness of $\ck$ it follows that (finite) limits of lucid functors are lucid, see \cite[(1.7) 1.12]{F}. Thus  \ref{pre3} implies that $\cp \ck$ is closed under  (finite) limits in $[\ck^{\op}, \Set]$.

Conversely, let $\cp \ck$ be (finitely) complete. For every (finite) diagram $D$ in $\ck$ its composite with  the Yoneda embedding 
$Y\colon \ck \to [\ck^{\op}, \Set]$ has, by \ref{lim}, a petty pointwise limit in $[\ck^{\op}, \Set]$. This is clearly equivalent to $D$  having a pre-limit in $\ck$.
\end{proof} 
 
\begin{remark}\label{prod} 
Analogously, $\cp\ck$ has finite products iff $\ck$ has finite pre-products.
\end{remark}

\begin{lemma}\label{prelimit}
Given objects $A_{ij}$ $(i,j\in I)$ of $\ck$ for which $\cp\ck$ has the following product
$$
\underset{i\in I}\prod\coprod\limits_{j\in I} YA_{ij},
$$
it follows that a pre-product of $A_{ii}$ $(i\in I)$ exists in $\ck$.

More generally, given a diagram $D:\cd\times\cd\to\ck$ for which 
$$\lim_d\underset{d'}\colim YD(d,d')
$$ 
exists in $\cp\ck$, then $\ck$ has a pre-limit of the diagram $D\Delta_D$.
\end{lemma}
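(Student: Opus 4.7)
The plan is to exploit the smallness (equivalently, pettiness) of the product/limit object $P$ in $\cp\ck$: since every small functor is a colimit, hence a quotient, of a small coproduct of representables, it admits a weakly initial set in its category of elements, from which we extract a pre-limit in $\ck$ by Yoneda.

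For the special case, set $P=\prod_{i\in I}\coprod_{j\in I}YA_{ij}$. By Remark~\ref{lim} and the fact that $\cp\ck$ is closed under small colimits in $[\ck^{\op},\Set]$, both the product and the coproducts here are computed pointwise, so
\[
P(X)\;=\;\prod_{i\in I}\coprod_{j\in I}\ck(X,A_{ij}),
\]
i.e.\ elements are $I$-tuples $((j_i,f_i))_{i\in I}$ with $j_i\in I$ and $f_i\colon X\to A_{ij_i}$. By Yoneda, these correspond bijectively to morphisms $YX\to P$, and cones of the diagonal $(A_{ii})_{i\in I}$ over $X$ are exactly the tuples with $j_i=i$ for all $i$.

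Since $P$ is small, it is petty, so there is a set $T$, objects $X_t$ of $\ck$, and morphisms $\alpha_t\colon YX_t\to P$ with $\coprod_{t\in T}YX_t\twoheadrightarrow P$ pointwise surjective. By Yoneda each $\alpha_t$ corresponds to an element $\alpha_t^\ast\in P(X_t)$, which we write as $((j_i^t,g_i^t))_{i\in I}$. Put $T_0=\{t\in T:j_i^t=i\text{ for all }i\}$: for $t\in T_0$ we then have a genuine cone $(g_i^t\colon X_t\to A_{ii})_{i\in I}$ of the diagonal. I claim these cones form the desired pre-product. Given any cone $(f_i\colon X\to A_{ii})_{i\in I}$, the corresponding element of $P(X)$ factors as $P(x)(\alpha_t^\ast)$ for some $t\in T$ and $x\colon X\to X_t$; componentwise this is the equation $(i,f_i)=(j_i^t,g_i^t\cdot x)$ in the set-theoretic disjoint union $\coprod_j\ck(X,A_{ij})$. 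Since coproducts in $\Set$ are disjoint, this forces $j_i^t=i$ for all $i$ (so $t\in T_0$) and simultaneously $g_i^t\cdot x=f_i$, giving the required factorization.

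The general case follows the same template with $P=\lim_d\colim_{d'}YD(d,d')$, whose pointwise description is $P(X)=\lim_d\colim_{d'}\ck(X,D(d,d'))$, a compatible family of equivalence classes $([d'(d),h_d])_d$; cones of $D\Delta_\cd$ over $X$ correspond precisely to those families representable on the diagonal as $([d,f_d])_d$. Pettiness again yields $\alpha_t\colon YX_t\to P$ with the requisite factorization property, and one considers the set of pairs $(t,s)$ where $s$ is a cone of $D\Delta_\cd$ over $X_t$ whose induced morphism $YX_t\to P$ agrees with $\alpha_t$. The main obstacle, and the chief difference from the special case, is that the equivalence relation in a general (non-filtered, non-discrete) colimit of Hom-sets is finer than disjointness of summands, so the identity $[d,f_d]=[d'(d),h_t^d\cdot x]$ does not immediately read off the desired indices; handling this requires inspecting the zigzag generating the colimit relation in order to replace the chosen representatives by diagonal ones, after which the Yoneda-based factorization argument goes through exactly as before to produce a pre-limit of $D\Delta_\cd$.
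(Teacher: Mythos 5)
Your proof of the first statement is correct and is essentially identical to the paper's: compute the product of coproducts pointwise, use smallness $\Rightarrow$ pettiness to obtain a weakly initial set of elements $((j_i^t,g_i^t))_{i\in I}$ of the functor, keep only the indices $t$ with $j_i^t=i$ for all $i$, and use the disjointness of coproducts in $\Set$ (equivalently, that the action of the functor on morphisms preserves the summand indices) to see that every diagonal element factors through a diagonal generator. There is nothing to add there.

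For the general statement the situation is different. The paper disposes of it with the single sentence ``the latter is analogous,'' and you have correctly located the exact point where the analogy is not automatic: in $\colim_{d'}\ck(X,D(d,d'))$ the summand index of an element is no longer well defined, so the equality $[d,f_d]=[d'_t(d),h^t_d\cdot x]$ does not let you read off $d'_t(d)=d$ nor $f_d=h^t_d\cdot x$. But your proposed resolution --- ``inspecting the zigzag generating the colimit relation in order to replace the chosen representatives by diagonal ones'' --- is a promissory note rather than an argument: a zigzag connecting $f_d$ to $h^t_d\cdot x$ passes through morphisms $X\to D(d,d'')$ for various $d''$, and you do not explain how such a zigzag yields a cone of $D\Delta_\cd$ with vertex $X_t$ through which the given cone $(f_d)_d$ factors. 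As written, your proof establishes exactly what the paper's written proof establishes (the discrete case) and leaves a genuine gap in the general case; to close it one must explain how a weakly initial set of elements of $\lim_d\colim_{d'}YD(d,d')$ produces a weakly terminal \emph{set} of cones of $D\Delta_\cd$ despite the colimit identifications (for instance by analysing the canonical natural transformation from the cone functor $\lim_d\ck(-,D(d,d))$ into that limit), and this is precisely the step that neither your proposal nor the paper supplies.
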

\begin{proof}
We prove the first statement, the latter is analogous. The functor $H=\prod\limits_i\coprod\limits_j YA_{ij}$ assigns to every object $P$ the collection of all $p=(p_i)_{i\in I}$ where $p_i:P\to A_{ij_i}$ is a morphism of $\ck$ for some $j_i\in I$. This follows from $\cp\ck$ being closed under colimits and existing limits in $[\ck^{\op},\Set]$. Since $H$ is small, it is petty.  Thus, by Remark \ref{pre0}(b)
we have a set of elements $p^t=(p^t_i)$ of $HP^t$ indexed by $t\in T$ where $p^t_i:P^t\to A_{ij_i}$ are morphisms for $i\in I$, with the following property:
for every $p\in HP$ there exists $t\in T$ and $f:P\to P^t$ such that $j_i=j^t_i$ and the triangles below commute
$$
\xymatrix@C=4pc@R=4pc{
P \ar[d]_{f} \ar [r]^{p_i} & A_{ij_i}\\
P^t \ar[ur]_{p^t_i} 
}
$$
for every $i\in I$. Consequently, if $T_0\subseteq T$ is the set of all $t\in T$ with $j^t_i=i$ for every $i\in I$, then the cones
$$
p^t_i:P^t\to A_{ii}\quad\quad (i\in I)
$$
for $t\in T_0$ form the desired pre-product.
\end{proof}
 
\begin{definition}[see \cite{R}]\label{pre-cc}
A category $\ck$ is called \textit{cartesian pre-closed} if it has finite products and for every pair of objects $A$ and $B$ 
the functor $\ck(A\times-,B):\ck^{\op}\to\Set$ is small. 
\end{definition}

The second author proved in \cite{R} Proposition 1 that for pre-complete categories $\ck$ cartesian closedness of $\cp\ck$ is equivalent to cartesian pre-closedness of $\ck$. We will show that this also implies pre-completeness of $\ck$:

\begin{theorem}\label{pre-cc1} 
Let $\ck$ have finite products. Then $\cp\ck$ is cartesian closed if and only if $\ck$ is pre-complete and cartesian pre-closed.
\end{theorem}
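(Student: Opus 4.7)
The ``if'' direction is Proposition~1 of \cite{R}, so the task is the converse: assuming $\cp\ck$ is cartesian closed and $\ck$ has finite products, I need to derive both cartesian pre-closedness and pre-completeness of $\ck$. I handle these two in turn.

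For cartesian pre-closedness, I first observe that the Yoneda embedding $Y\colon\ck\to\cp\ck$ preserves finite products: by Remark~\ref{lim} existing limits in $\cp\ck$ are formed pointwise in $[\ck^{\op},\Set]$, and
\[
(YA\times YB)(P)=\ck(P,A)\times\ck(P,B)=\ck(P,A\times B)=Y(A\times B)(P).
\]
Since $\cp\ck$ is cartesian closed, the exponential $[YA,YB]$ exists and is a small functor. Applying Yoneda together with the adjunction $(YA\times-)\dashv[YA,-]$ in $\cp\ck$ yields a natural isomorphism
\[
\ck(A\times P,B)\cong\cp\ck(YA\times YP,YB)\cong\cp\ck(YP,[YA,YB])\cong[YA,YB](P).
\]
Thus $\ck(A\times-,B)\cong[YA,YB]$ is small for every $A,B\in\ck$, which is precisely cartesian pre-closedness.

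For pre-completeness, by Corollary~\ref{lim1} it suffices to show that $\cp\ck$ is complete. My plan is to apply Lemma~\ref{prelimit}: given a diagram $E\colon\cd\to\ck$, construct $D\colon\cd\times\cd\to\ck$ with $D\Delta_D=E$ such that $\lim_d\colim_{d'}YD(d,d')$ exists in $\cp\ck$. Cartesian closedness enters through the fact that $X\times-$ is a left adjoint and so preserves colimits, yielding the distributivity $X\times\coprod_jY_j\cong\coprod_j(X\times Y_j)$; iterating this rewrites products of coproducts of representables as coproducts of finite products of representables, placing the required $\lim\colim$ inside $\cp\ck$. For discrete $\cd$ this is exactly the setting of Lemma~\ref{prelimit}, where $\prod_i\coprod_jYA_{ij}$ is the relevant object.

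The main obstacle will be choosing $D$ correctly for general (non-discrete) shapes $\cd$, so that two things hold simultaneously: (i) the cartesian-closed distributivity above keeps $\lim_d\colim_{d'}YD(d,d')$ inside $\cp\ck$ (as opposed to merely in $[\ck^{\op},\Set]$), and (ii) the ``diagonal'' elements of $H=\lim_d\colim_{d'}YD(d,d')$---those representable by pairs $(d,g_d)$ with $g_d\colon P\to D(d,d)=E(d)$ forming a cone on $E$---form a well-defined subset, so that the weakly initial set of elements of $H$ (existing because $H$ is small) restricts to a weakly initial set of cones on $E$. The pre-product case of Lemma~\ref{prelimit} achieves this by separating the index choice $\hat{\pi}_i$ from the component morphisms; the extension to a general $\cd$ requires matching the morphisms of $\cd$ against the transition data of $D$, and getting this matching right is the delicate point.
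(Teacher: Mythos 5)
Your treatment of the ``if'' direction (citing \cite{R}, Proposition 1) and of cartesian pre-closedness is fine; the Yoneda computation $\ck(A\times-,B)\cong[YA,YB]$ is correct and is essentially the easy half of \cite{R}, Proposition 1 done by hand. The genuine gap is in the pre-completeness part, which is precisely the new content of the theorem, and you leave it as a plan with an acknowledged ``delicate point'' rather than a proof. Moreover, the mechanism you propose cannot work as stated: distributivity $X\times\coprod_j Y_j\cong\coprod_j(X\times Y_j)$ only rewrites \emph{finite} products of coproducts, whereas the object $\prod_{i\in I}\coprod_j YA_{ij}$ of Lemma~\ref{prelimit} involves an arbitrary (infinite) product, and cartesian closedness of $\cp\ck$ gives you binary products and exponentials, not arbitrary limits. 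So nothing in your argument places $\lim_d\colim_{d'}YD(d,d')$ inside $\cp\ck$, and the problem of ``choosing $D$ correctly for general $\cd$'' is exactly what remains to be solved.

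The paper resolves both difficulties with one choice: given $D_0\colon\cd\to\ck$, set $D(d,d')=D_0d'$, i.e.\ constant in the first variable, so that $D\Delta=D_0$ and $\colim_{d'}YD(d,d')=H_0:=\colim YD_0$ for every $d$. The required limit is then the limit of the \emph{constant} diagram at $H_0$ over $\cd$, and this exists in $\cp\ck$ because it is an exponential: with $H_1=\colim_{d\in\cd}Y1$ one has
$$
[H_1,H_0]\;\cong\;\lim_{d\in\cd}[Y1,H_0]\;\cong\;\lim_{d\in\cd}H_0 ,
$$
using that $[-,H_0]$ turns colimits into limits (since each $K\times-$ preserves colimits in $\cp\ck$) and that $Y1$ is terminal. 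Lemma~\ref{prelimit} then yields a pre-limit of $D_0$ directly; there is no need to prove $\cp\ck$ complete first, and the ``matching against the transition data of $D$'' that worried you disappears because the double diagram is constant in $d$. Without this (or an equivalent) construction your proof of pre-completeness is not complete.
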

\begin{proof}
Let $\cp\ck$ be cartesian closed. Then $\ck$ is pre-complete. Indeed, for every diagram $D_0:\cd\to\ck$ we form the constant diagram $D_1:\cd\to\ck$ with value $1$, a terminal object, and use the exponential object $[H_1,H_0]$ where $H_i=\colim YD_i$. The existence of $[H_1,H_0]$ implies (due to colimits preserved by functor $K\times -$ in $\cp\ck$) that the following limit
$$
[H_1,H_0]=\lim\limits_{d\in\cd} [1,H_0]\cong\underset{d\in\cd}\lim\ \underset{d'\in\cd}\colim YD_0d'
$$
exists in $\cp\ck$. Apply Lemma \ref{prelimit} to
$$
D:\cd\times\cd\to\ck,\quad\quad (d,d')\mapsto D_0d'.
$$
We deduce that a pre-limit of  $D\cdot\Delta=D_0$ exists.

The rest follows from \cite{R} Proposition 1.
\end{proof}

\begin{remark}\label{open} 
In a cartesian pre-closed category every pair $A$ and $B$ of objects posseses a set of morphisms
$$
\ev_i \colon A \times [A,B]_i \to B \quad (i\in I)
$$
with the universal property of Definition \ref{multi} except that neither $i$ nor $\bar c$ are required to be unique. Indeed, by Remark
\ref{pre0}(b) this is precisely the fact that the functor $\ck(A\times-,B):\ck^{\op}\to\Set$ is petty. It is an open question whether (in analogy to the free coproduct completion) the pettiness of the above functors implies that they are small, i.e., that our category is cartesian pre-closed.
\end{remark}
	
\begin{example}\label{pre-cc2}
Every cartesian multi-closed category is cartesian pre-closed. Indeed, given the multi-universal morphism 
$\ev_i:A\times [A,B]_i\to B$ $(i\in I)$ of Definition \ref{multi}, then in $[\ck^{\op},\Set]$ we have
$$
\ck(A\times-,B) = \coprod\limits_{i\in I} \ck(-,[A,B]_i)
$$
This is clear since to give a morphism $c:A\times C\to B$ means precisely to give a (unique) $i\in I$ and a (unique) moprhism 
$\bar{c}:C\to [A,B]_i$. Thus, $\ck(A\times-,B)$ is small.
\end{example}



\begin{exs}\label{nonmcc}
We give examples of cartesian pre-closed categories which are not cartesian multi-closed.

(a) The category $\Top$ is not cartesian multi-closed (by Remark \ref{multi1}). It is cartesian pre-closed because it is infinitary extensive, thus Theorems 1 and 2 of \cite{R} apply.

(b) $\Top_0$, the full subcategory of $T_0$ spaces. The argument is the same.

(c) The full subcategory $\Set'$ of $\Set$ on all sets of cardinality other than continuum is cartesian pre-closed but not multi-closed. Indeed, this category has finite products. To prove that $\Set'(A\times -,B)$ is small, it is by Remark \ref{pre0} sufficient to verify that it is petty because subfunctors of representable presheaves on $\Set'$ are small (which is easy to verify, or see the proof of
Lemma \ref{cowell1} below). We present a weakly initial object in the category of elements of $\Set'(A\times -,B)$: Choose a set $T$ of cardinality larger than continuum. Then the set $[A,B]+T$ together with an arbitrary function 
$\ev:A\times ([A,B]+T)\cong A\times [A,B]+A\times T\to B$ whose left-hand component is the evaluation provides a weakly initial element of $\Set'(A\times -,B)$. In fact, given $f: A\times X\to B$ in $\Set'$ there exists a (not necessarily unique) $g:X\to [A,B]+T$ with $f = \ev.(A\times g)$. 

$\Set'$ is not cartesian multi-closed because, having a strictly initial object, it would be cartesian closed, see Remark \ref{multi1}. However, the sets $\Bbb N$ and $2$ obviously fail to have an exponential object in $\Set'$. (Since given $[\Bbb N,2]$ in $\Set'$, its
elements would bijectively correspond to maps $1\times \Bbb N\to 2$, hence $[\Bbb N,2]$ would have cardinality continuum.)
\end{exs}
We  are going to improve the above Theorem by moving to local cartesian closedness. For that we use the characterization of locally cartesian closed exact completions $\cc_{\ex}$ due to Carboni and Rosolini \cite{CR}, which we now recall. In that paper $\cc$ was supposed to have weak finite limits. Recently, Emmenegger showed that the result is only correct if finite completeness is assumed, see \cite{E}. The following Definition and Theorem are from \cite{CR}.

\begin{definition}\label{simple}
By a \textit{weak simple product} of a morphism $b:B\to A\times X$ over $A$ is meant an object $P$ and morphisms 
$\varepsilon:A\times P\to B$ and $w:P\to X$ forming a commutative triangle as follows
$$
\xymatrix@C=4pc@R=4pc{
A\times  P \ar[d]_{\varepsilon} \ar [r]^{A\times w} & A\times X\\
B \ar[ur]_{b}&
}
$$
They are also requested to have the following weak universal property: given morphisms $\varepsilon':A\times P'\to B$ and $w':P'\to X$
with $b\varepsilon'=A\times w'$, there exists a morphism $f:P'\to P$ such that $\varepsilon' = \varepsilon (A\times f)$ and $w'= wf$.
\end{definition}

\begin{theorem}\label{simple2}
A finitely complete category has a locally cartesian closed exact completion if and only if it has weak simple products.
\end{theorem}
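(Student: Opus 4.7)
The plan is to exploit the standard relationship between $\cc$ and $\cc_{\ex}$: objects of $\cc$ become regular projectives in $\cc_{\ex}$, and every object of $\cc_{\ex}$ is a regular quotient of an object of $\cc$.

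For the easy direction $(\Rightarrow)$, suppose $\cc_{\ex}$ is locally cartesian closed, so the pullback functor $\pi_X^{\ast}\colon \cc_{\ex}/X \to \cc_{\ex}/(A\times X)$ has a right adjoint $\Pi_{\pi_X}$. Given $b\colon B \to A \times X$ in $\cc$, let $\tilde w\colon \tilde P\to X$ be $\Pi_{\pi_X}(b)$ with counit $\tilde\varepsilon\colon A\times \tilde P \to B$ satisfying $b\circ\tilde\varepsilon = A\times\tilde w$. Choose a regular epimorphism $e\colon P\twoheadrightarrow \tilde P$ with $P\in\cc$, and set $w=\tilde w\circ e$ and $\varepsilon=\tilde\varepsilon\circ(A\times e)$. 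Then $(P,\varepsilon,w)$ is a weak simple product in $\cc$: for any competing datum $(P',\varepsilon',w')$ with $P'\in\cc$, the genuine universal property of $\tilde P$ produces a morphism $g\colon P'\to\tilde P$ in $\cc_{\ex}$, and since $P'$ is regular projective in $\cc_{\ex}$, $g$ lifts through $e$ to some $f\colon P'\to P$ in $\cc$ satisfying $w\circ f=w'$ and $\varepsilon\circ(A\times f)=\varepsilon'$.

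For the harder direction $(\Leftarrow)$, assume $\cc$ has weak simple products. Following \cite{CR}, construct $\Pi_{\pi_X}$ in $\cc_{\ex}$ by taking a weak simple product $(P,\varepsilon,w)$ of $b\colon B\to A\times X$ in $\cc$ and quotienting $P$ by the equivalence relation $R$ that identifies parallel pairs $p_1,p_2\colon T\to P$ with $wp_1=wp_2$ and $\varepsilon\circ(A\times p_1)=\varepsilon\circ(A\times p_2)$. Finite completeness of $\cc$ allows one to assemble $R$ as an honest equivalence relation in $\cc$, and effectivity in $\cc_{\ex}$ produces a quotient $q\colon P\twoheadrightarrow Q$; one then checks that $(Q,\bar\varepsilon,\bar w)$ with induced data is the genuine simple product, the weak universality of $P$ supplying factorizations and the quotient by $R$ forcing their uniqueness. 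Extending from $b$ between $\cc$-objects to arbitrary $b$ in $\cc_{\ex}$ uses the regular-projective presentation, and the standard reduction of arbitrary $\Pi_f$ to simple products along projections (via the graph factorization $f=\pi_Y\circ\langle\id,f\rangle$) completes the verification of local cartesian closedness.

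The main obstacle is the correct definition of $R$: because $\cc$ need not have exponentials, the equation $\varepsilon\circ(A\times p_1)=\varepsilon\circ(A\times p_2)$ cannot be curried into a single equation on $P\times P$, so the universal quantification over $A$ must be extracted via a careful combination of finite limits in $\cc$ and the weak universal property of $P$ itself. This is the delicate point that Emmenegger \cite{E} identifies as requiring genuine finite completeness of $\cc$ rather than merely weak finite limits, thereby correcting the hypothesis in the original Carboni--Rosolini proof. Once $R$ is correctly assembled, the remaining verifications in $\cc_{\ex}$ reduce to routine diagram chases using projectivity of $\cc$-objects and effectivity of equivalence relations.
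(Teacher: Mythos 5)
You should first note that the paper itself offers no proof of this statement: it is imported verbatim from Carboni--Rosolini \cite{CR}, with the finite-completeness hypothesis coming from Emmenegger's correction \cite{E}. So your proposal has to stand on its own. Your forward direction does: it uses only the standard facts that objects of $\cc$ are regular projective in $\cc_{\ex}$ and that every object of $\cc_{\ex}$ is a regular quotient of one of them, and the lifting of the genuine counit data through a projective cover $e\colon P\twoheadrightarrow\tilde P$ is exactly right.

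The backward direction, however, has a genuine gap at precisely the point you flag yourself. The relation $R$ you want on $P$ --- pairs $(p_1,p_2)$ with $wp_1=wp_2$ and $\varepsilon\circ(A\times p_1)=\varepsilon\circ(A\times p_2)$ --- involves a universal quantification over $A$, and internalizing such a quantifier as a subobject of $P\times P$ is exactly what the right adjoint $\Pi_A$ would accomplish; defining $R$ this way is therefore circular, and ``a careful combination of finite limits in $\cc$ and the weak universal property of $P$'' is a placeholder for the theorem's actual technical content, not a construction. (The argument in \cite{CR} does not quotient the weak simple product of $b$ by an equivalence relation of $\cc$; it builds the simple product in $\cc_{\ex}$ directly as a pseudo-equivalence relation whose two levels come from weak simple products of several auxiliary maps, and the universal property must then be checked against arbitrary objects of $\cc_{\ex}$, not only the regular projectives --- which is where your ``routine diagram chases'' hide the work.) Likewise, ``extending from $b$ between $\cc$-objects to arbitrary $b$ in $\cc_{\ex}$ uses the regular-projective presentation'' asserts without argument that right adjoints to pullback descend along regular epimorphic covers of the domain and of the base; this requires Beck--Chevalley-type compatibilities, not merely the existence of covers. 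The reduction of general dependent products to simple products via the graph factorization is fine. As written, your proposal is a correct high-level roadmap of the Carboni--Rosolini proof, but the hard implication is deferred rather than proved.
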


\begin{remark}\label{simple1}
Weak simple products are also callled weak dependent products. The "pre" version was called approximate dependent product  in \cite{R}:
it is a set of objects $P_i$ $(i\in I)$ and morphisms $\varepsilon_i:A\times P_i\to B$, $w_i:P_i\to X$ $(i\in I)$ such that 
$b\varepsilon_i=A\times w_i$. The weak universal property states that given $\varepsilon':A\times P'\to B$ and $w':P'\to X$
with $b\varepsilon'=A\times w'$, there exists $i\in I$ and a morphism $f:P'\to P_i$ with $\varepsilon'=\varepsilon_i(A\times f)$ and $w'=w_if$.
\end{remark}

\begin{theorem}\label{lcc}
For a complete category $\ck$ the following conditions are equivalent:

(i) $\cp\ck$ is locally cartesian closed,

(ii) $\cp\ck$ is cartesian closed,

(iii) $\Sigma\ck$ is cartesian pre-closed, 

(iv) $\Sigma \ck$ has weak simple products, and

(v) $\ck$ is cartesian pre-closed.
\end{theorem}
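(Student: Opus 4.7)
My plan is to prove the five conditions equivalent as a circle of implications, with most equivalences following directly from previously established theorems and only one requiring new work. Two preparatory observations are essential. First, since $\ck$ is complete, it is pre-complete (limits are pre-limits) and multi-complete (limits are multi-limits with singleton index set), so by Remark~\ref{sigma}(1), $\Sigma\ck$ is (small-)complete and in particular finitely pre-complete with finite products. Second, I will use two descriptions of $\cp\ck$: first, $\cp\ck = (\Sigma\ck)_{\ex}$ by Lemma~\ref{completion1}; second, $\cp\ck = \cp(\Sigma\ck)$, which follows from the universal property of the free cocompletion applied to the composite embedding $\ck \hookrightarrow \Sigma\ck \hookrightarrow \cp\ck$.

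Four of the five equivalences are then immediate. The implication (i) $\Rightarrow$ (ii) is trivial. The equivalence (ii) $\Leftrightarrow$ (v) is Theorem~\ref{pre-cc1} applied to $\ck$. The equivalence (ii) $\Leftrightarrow$ (iii) is Theorem~\ref{pre-cc1} applied to $\Sigma\ck$, using $\cp(\Sigma\ck) = \cp\ck$. Finally, (i) $\Leftrightarrow$ (iv) is the Carboni--Rosolini--Emmenegger Theorem~\ref{simple2} applied to $\Sigma\ck$, using $\cp\ck = (\Sigma\ck)_{\ex}$.

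To close the cycle, I prove (iii) $\Rightarrow$ (iv). Given $b\colon B \to A\times X$ in $\Sigma\ck$, I define the \emph{section presheaf}
$$
S_b(P) = \{(\varepsilon, w) \mid \varepsilon\colon A\times P \to B,\ w\colon P \to X,\ b\varepsilon = A\times w\}
$$
on $(\Sigma\ck)^{\op}$, functorial in $P$ by precomposition. Then $S_b$ is the pullback in $[(\Sigma\ck)^{\op}, \Set]$ of the cospan
$$
\Sigma\ck(A\times -, B) \xrightarrow{b \circ -} \Sigma\ck(A\times -, A\times X) \xleftarrow{A\times -} \Sigma\ck(-, X).
$$
By hypothesis (iii), both functors $\Sigma\ck(A\times -, Y)$ (for $Y = B$ and $Y = A\times X$) are small, and $\Sigma\ck(-, X)$ is representable, hence small. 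Since $\Sigma\ck$ is finitely pre-complete, $\cp\ck = \cp(\Sigma\ck)$ is closed under finite limits in $[(\Sigma\ck)^{\op},\Set]$ by Corollary~\ref{lim1} and Remark~\ref{lim}, so $S_b$ is small. In particular $S_b$ is petty, so by Remark~\ref{pre0}(b) there exists a set $\{(P_i, \varepsilon_i, w_i)\}_{i \in I}$ with $(\varepsilon_i, w_i) \in S_b(P_i)$ such that every $(\varepsilon, w) \in S_b(P)$ factors as $\varepsilon = \varepsilon_i (A\times f)$, $w = w_i f$ for some $i \in I$ and some $f\colon P \to P_i$. Unpacking the definition of $S_b$, this is precisely a weak simple product of $b$ in the ``pre'' sense of Remark~\ref{simple1}.

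The main obstacle is the construction in the third paragraph; the other equivalences are bookkeeping. The delicate point there is exhibiting $S_b$ as a finite limit of small presheaves, so that smallness of $S_b$ follows from closure of $\cp\ck$ under finite limits in $[(\Sigma\ck)^{\op},\Set]$. This closure in turn relies on $\Sigma\ck$ being finitely pre-complete, which is where the completeness hypothesis on $\ck$ enters essentially.
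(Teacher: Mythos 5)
There is a genuine gap: the identity $\cp(\Sigma\ck)=\cp\ck$ that you invoke for the equivalence (ii) $\Leftrightarrow$ (iii) is false. The free colimit completion does not respect the coproducts already present in $\Sigma\ck$: in $\cp(\Sigma\ck)$ the representable $Y(A+B)$ is not isomorphic to $YA+YB$ (evaluate both at $A+B$; the identity lies only in the former), whereas the induced colimit-preserving functor $\cp(\Sigma\ck)\to\cp\ck$ identifies them, so it is not an equivalence. Concretely, for $\ck$ the terminal category one has $\cp\ck=\Set$ and $\Sigma\ck=\Set$, but $\cp(\Sigma\ck)=\cp\Set$ is not a topos (Example \ref{topos1}), hence not equivalent to $\Set$. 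The correct relation between the two completions is Lemma \ref{completion1}, $\cp\ck=(\Sigma\ck)_{\ex}$, and the exact completion is a quite different construction from $\cp$. With (ii) $\Leftrightarrow$ (iii) removed, your remaining implications give (iii) $\Rightarrow$ (iv) $\Rightarrow$ (i) $\Rightarrow$ (ii) $\Leftrightarrow$ (v), but nothing implies (iii), so the five conditions are not all linked. The paper closes this hole by proving (ii) $\Rightarrow$ (iii) from Theorem 1 of \cite{R}: a complete, infinitary extensive category $\cc$ whose exact completion is cartesian closed is cartesian pre-closed; this applies to $\cc=\Sigma\ck$ (complete by Remark \ref{sigma}(1), infinitary extensive by Proposition \ref{extensive}) precisely because $\cp\ck=(\Sigma\ck)_{\ex}$.

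Your step (iii) $\Rightarrow$ (iv) is essentially sound and is in fact a nice self-contained version of the paper's appeal to Lemma A1 of \cite{R}: realizing the ``section presheaf'' $S_b$ as a pullback of small presheaves on the finitely pre-complete category $\Sigma\ck$, and extracting a weakly initial set of its elements, is a legitimate way to obtain approximate dependent products. But note that what you obtain is only the ``pre'' version of Remark \ref{simple1} (a \emph{set} of objects $P_i$), whereas condition (iv) asks for a weak simple product, i.e.\ a single object. You still need the final step: form $P=\coprod_{i\in I}P_i$ in $\Sigma\ck$ and use that $A\times-$ preserves coproducts there (Example \ref{sigma0}(e) and Remark \ref{sigma}(2)) to assemble $[\varepsilon_i]\colon A\times P\to B$ and $[w_i]\colon P\to X$ into an actual weak simple product. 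This is easy but cannot be omitted, since it is exactly where the coproduct completion earns its keep.
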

\begin{proof}
iii $\to$ iv. In Lemma A1 of \cite{R} it is proved that a complete, cartesian pre-closed category $\cc$ has approximate dependent products. We apply this to $\Sigma\ck$ to conclude that $\Sigma\ck$ has weak simple products. Indeed, given a morphism $b:B\to A\times X$, let
$\varepsilon_i:A\times P_i\to B$ and $w_i:P_i\to B$ $(i\in I)$ form an approximate dependent product in $\Sigma\ck$. Since $A\times -$
preserves coproducts (by Example \ref{sigma0}(e) and Remark \ref{sigma}(2)), the following morphisms $[\varepsilon_i]_{i\in I}:A\times \coprod P_i\to B$
and $[w_i]_{i\in I}:\coprod P_i\to A$ are easily seen to form a weak simple product of $b$ over $A$.

\noindent
iv $\to$ i. $\cp\ck=(\Sigma\ck)_{\ex}$ is locally cartesian closed by Theorem \ref{simple2}.

\noindent
i $\to$ ii is clear.

\noindent
ii $\to$ iii. In Theorem 1 of \cite{R} every complete, infinitary extensive category $\cc$ with $\cc_{\ex}$ cartesian closed is proved to be cartesian pre-closed. Apply this to $\cc=\Sigma\ck$, using Theorem \ref{pre-cc1}.

\noindent
v $\leftrightarrow$ ii follows from Theorem \ref{pre-cc1}.
\end{proof}
 
\begin{corollary}\label{pre-cc3}
Let $\ck$ be a complete category which is
\begin{enumerate}
\item[a.] cartesian closed, or
\item[b.] additive, or
\item[c.] dual to an extensive category.
\end{enumerate}
Then $\cp\ck$ is locally cartesian closed.
\end{corollary}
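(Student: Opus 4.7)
The plan is to invoke Theorem~\ref{lcc} to reduce the statement to verifying that $\ck$ is cartesian pre-closed in each of the three cases, since by that theorem, for complete $\ck$, local cartesian closedness of $\cp\ck$ is equivalent to cartesian pre-closedness of $\ck$. Thus the entire task becomes: check that each of the three classes of categories in (a), (b), (c) consists of cartesian pre-closed categories.

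For this I would route through the stronger notion of cartesian multi-closedness. Example~\ref{pre-cc2} shows that any cartesian multi-closed category is automatically cartesian pre-closed, because the multi-universal family $\ev_i\colon A\times [A,B]_i\to B$ realizes the functor $\ck(A\times -,B)$ as the coproduct $\coprod_{i\in I}\ck(-,[A,B]_i)$, which is patently small. So it suffices to verify, in each of the three cases, that $\ck$ is cartesian multi-closed. But this is precisely the content of Examples~\ref{multi1e}: part (a) of that example handles cartesian closed categories (the multi-indexing set is a singleton), part (b) handles categories in which $A\times B\cong A+B$, and in particular every additive category, and part (c) handles duals of extensive categories.

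Combining these ingredients, the argument in each case reads: $\ck$ is cartesian multi-closed by the appropriate part of Examples~\ref{multi1e}, hence cartesian pre-closed by Example~\ref{pre-cc2}, hence $\cp\ck$ is locally cartesian closed by Theorem~\ref{lcc} (condition (v) implies condition (i)). There is no genuine obstacle here since the heavy lifting has been done in Theorem~\ref{lcc} and in the earlier examples; the corollary is essentially a bookkeeping step assembling those results. The only point worth flagging is completeness of $\ck$, which is built into the hypothesis and needed precisely so that Theorem~\ref{lcc} applies.
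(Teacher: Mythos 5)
Your proof is correct and follows exactly the route the paper intends: Examples~\ref{multi1e}(a)--(c) give cartesian multi-closedness in each case, Example~\ref{pre-cc2} upgrades this to cartesian pre-closedness, and Theorem~\ref{lcc} (the equivalence of (v) and (i) for complete $\ck$) then yields local cartesian closedness of $\cp\ck$. The paper leaves the corollary without an explicit proof precisely because this assembly is the intended one, and you have also correctly identified completeness as the hypothesis that makes Theorem~\ref{lcc} applicable.
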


Thus categories $\cp\Set$, $\cp\Ab$, $\cp\Top$ and $\cp(\Set^{\op})$ (= accessible set functors) are locally cartesian closed.

\section{$\cp\ck$ is a pretopos but seldom a topos} 
Recall that a \textit{pretopos} is an exact and extensive category. That is, a category which is regular, has effective equivalence relations, and has finite coproducts which are disjoint and universal.
 
\begin{lemma}\label{pretopos}
Let $\ck$ be a finitely pre-complete category. Then $\cp\ck$ is a pretopos.  
\end{lemma}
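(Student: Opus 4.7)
The plan is to realise $\cp\ck$ as a full subcategory of the presheaf topos $[\ck^{\op},\Set]$ that is closed under all small colimits and under finite limits, and then to transfer each pretopos axiom from the ambient topos to $\cp\ck$ via this inclusion.

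First I would pin down the closure properties. By construction $\cp\ck$ is cocomplete, and by Corollary \ref{lim1} the assumption that $\ck$ is finitely pre-complete yields that $\cp\ck$ is finitely complete. The full embedding $I\colon\cp\ck\hookrightarrow[\ck^{\op},\Set]$ preserves small colimits, because a small colimit of small functors is again small (a small colimit of small colimits of representables remains a small colimit of representables), and it preserves all limits that exist in $\cp\ck$ by Remark \ref{lim}. Hence every finite limit and every small colimit computed inside $\cp\ck$ agrees with the corresponding one in $[\ck^{\op},\Set]$.

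Next I would transfer the pretopos axioms one by one. For regular factorisations: given $f\colon F\to G$ in $\cp\ck$, form its kernel pair $R\rightrightarrows F$ (finite completeness) and then its coequaliser $F\twoheadrightarrow F/R$ (cocompleteness); since $I$ preserves both constructions, this is precisely the regular epi-mono factorisation of $If$ in $[\ck^{\op},\Set]$, so the two legs in $\cp\ck$ are a regular epi and a mono. For pullback-stability of regular epis: a regular epi $p$ of $\cp\ck$ is sent by $I$ to a regular epi in $[\ck^{\op},\Set]$, its pullback along any morphism of $\cp\ck$ coincides with the pullback in $[\ck^{\op},\Set]$ and is again a regular epi there; taking the kernel pair of this pullback and coequalising inside $\cp\ck$ then exhibits it as a regular epi in $\cp\ck$ as well. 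Effective equivalence relations transfer in the same way: an equivalence relation in $\cp\ck$ is an equivalence relation in $[\ck^{\op},\Set]$ (the defining conditions are finite-limit conditions), its coequaliser in $\cp\ck$ matches the one in the topos, and the relation is effective there, hence also in $\cp\ck$. Finally, finite coproducts in $\cp\ck$ coincide with those in $[\ck^{\op},\Set]$, and pullbacks coincide as well, so the disjointness and universality of coproducts in the topos descend directly to $\cp\ck$.

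The only mild obstacle is bookkeeping: one must check, at each step, that the relevant kernel pair, coequaliser, pullback, or coproduct taken inside $\cp\ck$ is literally the same object as the one taken inside $[\ck^{\op},\Set]$. Once that compatibility is stated explicitly (from cocompleteness of $\cp\ck$, Remark \ref{lim}, and Corollary \ref{lim1}), every clause of the pretopos definition reduces to the well-known fact that $[\ck^{\op},\Set]$ is a topos, and no further calculation is needed.
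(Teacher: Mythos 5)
Your proposal is correct and follows essentially the same route as the paper: the paper's proof likewise observes that $\cp\ck$ is closed in $[\ck^{\op},\Set]$ under colimits and (by Remark \ref{lim} and Corollary \ref{lim1}) under finite limits, and then transfers all exactness and extensivity properties from the ambient presheaf topos. You merely spell out the axiom-by-axiom bookkeeping that the paper compresses into one sentence.
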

\begin{proof}
By Remark \ref{lim} and Corollary \ref{lim1}, $\cp\ck$ is closed under existing limits in $[\ck^{\op},\Set]$, and it is obviously closed under colimits. Thus $\cp\ck$ shares with $\Set$ all the exactness properties of colimits and finite limits. In particular, $\cp\ck$ is exact, extensive, thus, a pretopos.
\end{proof}
 
\begin{remark}\label{infinitary}
By the same argument we derive that if $\ck$ is finitely pre-complete, then

(a) $\cp\ck$ is infinitary extensive, see Remark \ref{extensive0},

(b) every monomorphism in $\cp\ck$ is regular, and

(c) every epimorphism in $\cp\ck$ is regular.

\noindent
For small categories $\ck$, we know that $\cp\ck=\Set^{\ck^{\op}}$ is a topos. The converse "almost" holds:
\end{remark}

\begin{theorem}\label{topos}
Given a category $\ck$  with copowers and finite intersections, then $\cp\ck$ is a topos iff $\ck$ is essentially small.
\end{theorem}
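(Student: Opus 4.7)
First, the easy direction: if $\ck$ is essentially small, take a small skeleton $\ck_0$; then $\cp\ck\simeq[\ck_0^{\op},\Set]$ is a presheaf topos.

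For the converse, assume $\cp\ck$ is a topos; the goal is to show $\ck$ is essentially small. The strategy is to extract bounding data for the isomorphism classes of $\ck$ from the two distinguished small objects of $\cp\ck$, namely the terminal $1$ and the subobject classifier $\Omega$. By Remark \ref{lim} together with Corollary \ref{lim1}, the terminal of $\cp\ck$ must be the constant presheaf $\Delta_1\colon\ck^{\op}\to\Set$, and the fact that $\Delta_1$ lies in $\cp\ck$ (i.e.\ is small, equivalently petty) is exactly the existence of a weakly initial set $\{A_j\}_{j\in J}$ in $\ck$: an epimorphism $\coprod_j YA_j\to\Delta_1$ translates pointwise into the condition that every $K\in\ck$ receives a morphism from some $A_j$. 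Applying Remark \ref{pre0}(b) to the small functor $\Omega$ yields a set of objects $B_i\in\ck$ and elements $\omega_i\in\Omega(B_i)=\Sub(YB_i)$ $(i\in I)$ weakly initial in $\el\,\Omega$, that is, a set of subobjects $m_i\colon S_i\hookrightarrow YB_i$ in $\cp\ck$ such that every subobject $S\hookrightarrow YK$ in $\cp\ck$ is the pullback of some $m_i$ along a morphism $Yf$ coming from some $f\colon K\to B_i$ in $\ck$.

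Specialising to the maximal subobject $\id_{YK}$ gives, for each $K\in\ck$, an index $i(K)\in I$ and a morphism $f_K\colon K\to B_{i(K)}$ such that $Yf_K$ factors through $m_{i(K)}$. The aim is to upgrade this factorisation so that $K$ is realised as a genuine subobject of $B_{i(K)}$ in $\cp\ck$: since $\Sub(YB_i)=\Omega(B_i)$ is a set, this embeds the isomorphism classes of $\ck$ into $\coprod_{i\in I}\Omega(B_i)$, a set, and so proves $\ck$ essentially small. The copowers and finite intersections in $\ck$ enter precisely in this refinement: $f_K$ need not be monic, and one replaces it by a canonical monomorphism assembled from the copower $\Hom_\ck(K,B_i)\cdot B_i$ (supplying sufficiently many test morphisms out of $K$ to distinguish its generalised points) together with finite intersections of the resulting kernel pairs and equalisers (producing a canonical joint kernel). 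The main obstacle is exactly this step: smallness of $\Omega$ supplies only \emph{weak} universal factorisations, so one cannot expect $f_K$ itself to embed $K$ into some $B_i$; it is the interplay of copowers, which create a sufficiently rigid test object, with finite intersections, which cut out a canonical subobject from all the data collected, that converts the weak factorisation into a true monomorphism and hence an injection on isomorphism classes.
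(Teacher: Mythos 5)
The easy direction and the general setup are fine: identifying $\Omega$, if it exists, with the presheaf $B\mapsto\Sub(YB)$ of small subfunctors via Yoneda, and extracting from its pettiness a weakly initial set in $\el\Omega$, is exactly how the paper opens its argument by contradiction. (Minor slip: smallness of the terminal presheaf gives a \emph{pre-terminal} set, i.e.\ every $K$ admits a morphism \emph{to} some $A_j$, not from it.) But the heart of your converse is missing. Specialising the weak factorisation to $\id_{YK}$ only tells you that $Yf_K$ factors through $S_{i(K)}\rightarrowtail YB_{i(K)}$; it produces no monomorphism out of $K$, and the mechanism you sketch for manufacturing one does not work. A copower $\Hom(K,B_i)\cdot B_i$ admits no canonical morphism from $K$ --- for your purpose you would need the power $B_i^{\Hom(K,B_i)}$, i.e.\ products, which are not assumed --- and even with products the canonical map $K\to\prod_i B_i^{\Hom(K,B_i)}$ is monic only when the $B_i$ form a cogenerating set, a strong hypothesis that follows neither from $\cp\ck$ being a topos nor from copowers and finite intersections. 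So the claimed injection of isomorphism classes of $\ck$ into $\coprod_{i\in I}\Omega(B_i)$ is unsupported; you have named the obstacle but not overcome it.

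The paper's route is genuinely different and avoids this embedding problem. It splits into two cases. If $\ck$ is a preordered class, the terminal object and wellpoweredness of the topos $\cp\ck$ already force essential smallness. Otherwise some object $L$ carries two distinct parallel morphisms out of it, so $\ck$ is automatically not essentially small, and one shows directly that $\Omega$ is not petty: given any candidate weakly initial set $\sigma_i\colon S_i\rightarrowtail\ck(-,A_i)$ in $\el\Omega$, take $K=\coprod_\alpha L$ with $\alpha>\card\ck(L,A_i)$ for all $i$, use the finite intersection $M=v_t\cap v_{t'}$ of two copower injections to glue a specific subfunctor $T\rightarrowtail\ck(-,K)$ (a wide pushout of the maps $\ck(-,p_s)$, $s<\alpha$), and check that $T\neq\Omega h(S_i)$ for every $h\colon K\to A_i$, since equality would force the $\alpha$ composites $h\cdot v_t$ to be pairwise distinct elements of $\ck(L,A_i)$. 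The copowers and finite intersections are thus used to build one explicit cardinality obstruction to pettiness of $\Omega$, not to embed every object of $\ck$ into a fixed set of objects. To salvage your approach you would have to supply the missing embedding argument, and under the stated hypotheses I do not see how.
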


\begin{proof}
(1) We consider first the case that $\ck$ is a preordered class. We are to prove that if $\cp\ck$ is a topos, then $\ck$ is essentially small. Since $\cp\ck$ has a terminal object, $\ck$ has a pre-terminal set of objects, and since $\cp\ck$ is wellpowered, so is $\ck$. It is easy to see that a wellpowered preordered class with a pre-terminal set of objects is essentially small.

(2)  Next consider $\ck$ having an object $L$ which is the domain of a parallel pair of distinct morphisms. Since for every cardinal $\alpha$ the coproduct injections of $\coprod\limits_{\alpha} L$ are then pairwise distinct, $\ck$ is not essentially small. We are going to prove that $\cp\ck$ is not topos.

Assuming that $\cp\ck$ has a subobject classifier $\Omega\colon \ck^{\op}\to \Set$, we derive a contradiction. Since $\cp\ck$ contains all hom-functors, Yoneda lemma implies that  (up to natural isomorphism) $\Omega$ is given as follows: on objects $A$ we have 
 $$
 \Omega A = \mbox{\ all small subfunctors of \ $\ck(-, A),$}
 $$
 and on morphisms $h\colon A\to B$ the map $\Omega h$ forms preimages of subfunctors under $\ck(-, h)$. We are going to prove that this presheaf $\Omega$ is not petty, which is the desired contradiction.
 
 For every set of elements of $\Omega$,
 $$
\sigma_i \colon S_i \rightarrowtail \ck (-, A_i)
\qquad (i\in I)
$$
we are going to find an object $K$ of $\ck$ and an element of $\Omega$
$$
\xymatrix@1{ 
\tau
\colon T \rightarrowtail\ck (-, K)
}
$$
such that $T\ne \Omega h(S_i)$  for all $i\in I$ and all $h\colon K\to A_i$. Our choice of $K$ is a copower of the above object $L$
$$
K=\coprod_{\alpha} L
$$
where $\alpha$ is a cardinal with
$$
\card \ck (L, A_i) < \alpha \quad \mbox{for all \ $i\in I$}\,.
$$
Denote by $v_t \colon L\to K$ \ ($t<\alpha$) the coproduct injections. Choose arbitrary two elements $t\ne t'$ in $\alpha$ and form the  intersection $m= v_t\cap v_{t'}$   
$$
\xymatrix{
& M \ar[dl]_{p} \ar[dd]^{m} \ar [dr]^{p'} &\\
L\ar[dr]_{v_t} && L \ar[dl]^{v_{t'}}\\
& K &
}
$$
Given any distinct elements  $s\ne s'$ in $\alpha$, they yield the same intersection:
$$
\xymatrix{
& M \ar[dl]_{p}   \ar [dr]^{p'} &\\
L\ar[dr]_{v_s} && L \ar[dl]^{v_{s'}}\\
& K &
}
$$
This follows from the (obvious) fact that there is an isomorphism $\sigma \colon K\to K$ with $v_s =\sigma \cdot v_t$ and $v_{s'} = \sigma \cdot v_{t'}$.

Our choice of the subfunctor $T$ of $\ck(-, K)$ uses the above morphism $p\colon M \to L$: put $L_s =L$ and  $p_s =p$ for all $s <\alpha$ and define a small presheaf $T$ as the wide pushout of the following morphisms
$$
\ck (-, p_s) \colon \ck(-, M) \to \ck (-, L_s) \qquad (s<\alpha)\,.
$$
Up to natural isomorphism, $T$ can be described as follows: on objects $X$ put
$$
TX =\coprod_{s<\alpha} \ck(X, L_s) \big/
\sim_X
$$
where $\sim_X$ is the least equivalence relation merging, for every morphism $f\colon X\to M$, all  $p_s \cdot f \colon X \to L_s$ with $s<\alpha$ to one element (denoted by $\langle f\rangle$).
The equivalence class of a morphism $g\colon X\to L_s$  not factorizing through $p$ is a singleton, and we denote it by $g_s$.

This small presheaf $T$ is a subobject of $\ck (-, K)$ since the cocone $\ck (-, v_s) \colon \ck(-,L)\to\ck (-, K)$ fulfills 
$\ck(-, v_s) \cdot \ck(-, p_s) = \ck (-, m)$ (independent of $s<\alpha$). This yields a natural transformation
$$
\tau \colon T\to \ck (-, K) \quad \mbox{with}\quad \tau \cdot \ck(-, p_s) = \ck(-, m) \quad (s<\alpha)\,.
$$
It assigns to $\langle f\rangle$ for $f\colon X\to M$ the value
$$
\tau_X (\langle f\rangle) = m\cdot f
$$
and to $g\colon X \to L_s$ not factorizing through $p$ the value
$$
\tau_X (g_s) =v_s\cdot g\,.
$$
This morphism $\tau$ is monic:
\begin{enumerate}
\item[(a)]
 Given distinct $f$, $f'\colon X \to M$, then $m\cdot f\ne m\cdot f'$ since $m$ is monic: indeed, each $v_t$ is a split monomorphism (the codiagonal splitting it).
\item[(b)] 
Given distinct $g\colon X \to L_s$ and $g'\colon X \to L_{s'}$ not factorizing through $p$, then in case $s=s'$ we have 
$v_s \cdot g \ne v_s \cdot g'$ since $v_s$ is a (split) monomorphism, and in case $s\ne s'$ we have $v_s \cdot g\ne v_{s'} \cdot g'$:
Otherwise $g$ would factorize through $p$ (and $g'$ through $p'$) due to the above pullback $v_s\cdot p= v_{s'} \cdot p'$.
\item[(c)]
Given $f\colon X \to M$ and $g\colon X\to L_s$ not factorizing through $m$, then $m\cdot f \ne v_s \cdot g$: since $v_s$ is a split monic and $m=v_s\cdot p$, the equality $m\cdot f = v_s\cdot g$ would yield a factorization $f$ or $g$ through~$p$.
\end{enumerate}

It remains to prove
$$
T\neq \Omega h(S_i) \quad \mbox{for all}\quad i\in I,\ \ h\colon K\to  A_i\,.
$$
Assuming to the contrary $T= \Omega h(S_i)$, we prove that the morphisms
$$
h\cdot v_t \colon L\to A_i \qquad (t<\alpha)
$$
are pairwise distinct, contradicting $\card \ck(L, A_i)<\alpha$.

For elements $t$, $t'$ of $\alpha$  with $h\cdot v_t = h\cdot v_{t'}$ we are going to verify that $t=t'$. The equality $T=\Omega h(S_i)$ means that in $[\ck^{\op}, \Set]$ we have  a pullback as follows:
$$
\xymatrix@C=3pc{
T \ar [r]^{\tau\quad } \ar[d]_{\varrho} & \ck(-, K) \ar[d]^{h\cdot (-)}\\
S_i \ar[r]_{\sigma_i\quad } & \ck(-, A_i)
}
$$
 Apply this to $\id_L$ in the component of $TL$ indexed by $t$: we get
 $$
 x= \varrho_L \big( (\id_L)_t\big) \in S_i L
 $$
 with 
 $$
 (\sigma_i)_L (x) = h\cdot v_t
 $$
 since $\tau_L((\id_L)_t)= v_t$. (Observe that $\id_L$  clearly does not factorize through $p$.) The codiagonal $\nabla \colon L+L\to L$ yields
$$ y = S_i \nabla (x) \in S_i (L+L)
$$
with
$$
(\sigma_i)_{L+L} (y) = h\cdot [v_t, v_{t'}]\,.
$$
Indeed, naturality of $\sigma_i$ gives, since $h\cdot v_t = h\cdot v_{t'}$
$$
(\sigma_i)_{L+L} S_i\nabla (x) = \ck (-, \nabla)(\sigma_i)_L (x) = h\cdot v_t \cdot \nabla = [h \cdot v_t, h\cdot v_{t'}].
$$
Thus, the above pullback applied to $L+L$ provides 
$$
z\in T(L+L) \quad \mbox{with}\quad \tau_{L+L} (z) = [v_t, v_{t'}]\,.
$$
Since $v_t$ does not factorize through $m$, neither does $[ v_t, v_{t'}]$, therefore $z$ lies in some component $s<\alpha$ of $T(L+L)$ and $v_s\cdot z = [v_t, v_{t'}]$. Thus $v_t$ factorizes through $v_s$, proving $t=s$, and $v_{t'}$ also factorizes through $v_s$,  hence $v_{t'} = s= v_t$, as desired.
\end{proof}

\begin{example}\label{topos1}
The categories $\cp\Set$ and $\cp\Cat$ are locally cartesian closed (by Theorem \ref{lcc}) but not toposes.
\end{example}

\begin{problem}\label{opb}
{
\em
Is there a category $\ck$ which is not essentially small with $\cp\ck$ a topos?
}
\end{problem}

\begin{remark}\label{generic}
(1) Menni characterized categories whose exact completion is a topos, see \cite{M}. He defined a \textit{generic proof} as
a map $\theta : \Theta \rightarrow \Lambda$ such that for every map $f:Y\rightarrow X$ there exists a map $g: X \rightarrow \Lambda$ for
which $f$ factorizes through ${g^*}\theta$, and ${g^*}\theta$ factorizes through $f$. And he proved (in Theorem 1.2) that a category has weak simple products and a generic proof iff its exact completion is a topos.

(2) Consequently, the theorem just proved implies for complete, cartesian pre-preclosed categories $\ck$ that the free colimit completion $\cp\ck$ is a topos iff the free coproduct completion $\Sigma\ck$ has a generic proof.
\end{remark}

\begin{corollary}\label{generic1} If a complete, cartesian pre-closed category with copowers is not essentially small, then its free coproduct completion does not have a generic proof.
\end{corollary}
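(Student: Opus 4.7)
The plan is to derive this as a direct consequence of Theorem \ref{topos} combined with Remark \ref{generic}(2), so there is essentially no new work to do beyond assembling the ingredients correctly.

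First, I would observe that the hypotheses of Theorem \ref{topos} are satisfied. Completeness of $\ck$ in particular gives finite intersections (finite wide pullbacks of monomorphisms are finite limits), and copowers are assumed. Thus Theorem \ref{topos} applies and yields the equivalence: $\cp\ck$ is a topos iff $\ck$ is essentially small. Since by hypothesis $\ck$ is not essentially small, we conclude that $\cp\ck$ is \emph{not} a topos.

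Next I would invoke Remark \ref{generic}(2). This remark records the following chain of implications for complete cartesian pre-closed $\ck$: Lemma \ref{completion1} identifies $\cp\ck$ with $(\Sigma\ck)_{\ex}$, the implication iii $\to$ iv of Theorem \ref{lcc} shows that $\Sigma\ck$ has weak simple products (since $\Sigma\ck$ is cartesian pre-closed when $\ck$ is, or alternatively one reads this off directly from Theorem \ref{lcc}), and Menni's theorem (Remark \ref{generic}(1)) then asserts that an exact completion of a finitely complete category with weak simple products is a topos iff the base category has a generic proof. Combining these, $\cp\ck$ is a topos iff $\Sigma\ck$ has a generic proof.

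Putting the two steps together: $\cp\ck$ is not a topos, hence $\Sigma\ck$ does not have a generic proof. There is no genuine obstacle here; the proof is a one-line bookkeeping argument. The only small point to verify is that the hypotheses match those of Remark \ref{generic}(2) and Theorem \ref{topos} simultaneously, which they do since completeness supplies finite intersections and cartesian pre-closedness is explicit.
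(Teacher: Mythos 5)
Your proof is correct and follows exactly the route the paper intends: Theorem \ref{topos} (applicable since completeness supplies finite intersections) shows $\cp\ck$ is not a topos, and Remark \ref{generic}(2) --- which rests on Lemma \ref{completion1}, the weak simple products from Theorem \ref{lcc}, and Menni's theorem --- converts this into the non-existence of a generic proof for $\Sigma\ck$. The paper's own justification is just the pointer ``See Theorem \ref{lcc}'', so your write-up is the same argument spelled out in full.
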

See Theorem \ref{lcc}.

The following examples are proved by Menni in a somewhat more technical manner (see \cite{M}, Propositions 5.7 and Lemma 5.5):

\begin{example}\label{generic2}
 (1) The category $\Set^{\rightarrow}$ does not have a generic proof. Indeed, it is equivalent to $\Sigma \Set$ (see Example \ref{sigma0}(c)).

(2) For every small category $\ck$ the free coproduct completion has a generic proof. The exact completion, which is the presheaf category for $\ck$, is namely a topos.
\end{example}

\section{$\cp \ck$ is sometimes wellpowered but not often}
This last section is devoted to the question whether $\cp\ck$ is wellpowered or cowellpowered. We first observe that the two problems are more or less equivalent.

\begin{prop}\label{well}
Every cowellpowered category $\cp \ck$ is also wellpowered. The converse holds for finitely pre-complete categories $\ck$.
\end{prop}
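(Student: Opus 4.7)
The plan is to bound subobjects and quotients of each object by dualising them into quotients-of-coproducts and subobjects-of-products respectively, and then invoke the (co)wellpoweredness hypothesis. Both halves rest on the principle that regular monos correspond bijectively to cokernel pairs, and dually regular epis to kernel pairs.

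For the forward direction, the essential preliminary step, requiring no hypothesis on $\ck$, is that every monomorphism $m\colon F\rightarrowtail G$ in $\cp\ck$ is regular. Since $\cp\ck$ is cocomplete and closed under (small) colimits in $[\ck^{\op},\Set]$, the cokernel pair $(u_1,u_2)\colon G\rightrightarrows P$ formed in $\cp\ck$ coincides with the pointwise pushout in $[\ck^{\op},\Set]$. Pointwise, $\Set$ has every mono regular, so the pointwise equalizer of $(u_1,u_2)$ in $[\ck^{\op},\Set]$ is exactly $m$, and this equalizer already lies in $\cp\ck$. Because the full embedding $\cp\ck\hookrightarrow[\ck^{\op},\Set]$ reflects any limit whose vertex is in its image, $m$ is also the equalizer of $(u_1,u_2)$ in $\cp\ck$.

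With that in hand, every subobject $[m\colon F\rightarrowtail G]$ is determined uniquely by its cokernel pair, which by the coproduct universal property is in turn determined by the epimorphism $[u_1,u_2]\colon G+G\twoheadrightarrow P$ (an epi by the uniqueness part of the pushout universal property). Distinct subobjects therefore yield non-isomorphic quotients of $G+G$; cowellpoweredness bounds the latter by a set, so $\cp\ck$ is wellpowered.

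For the converse, assume $\ck$ is finitely pre-complete. By Corollary \ref{lim1}, $\cp\ck$ is finitely complete, and by Remark \ref{infinitary}(c) every epimorphism in $\cp\ck$ is regular. The argument now dualises: a regular epi $e\colon A\twoheadrightarrow B$ is the coequalizer of its kernel pair $(k_1,k_2)\colon A\times_B A\rightrightarrows A$, which is encoded by the monomorphism $\langle k_1,k_2\rangle\colon A\times_B A\rightarrowtail A\times A$. Distinct quotients of $A$ yield distinct such subobjects of $A\times A$, so wellpoweredness forces only a set of quotients of $A$. The main obstacle is the first step: showing that every mono in $\cp\ck$ is regular even when $\cp\ck$ is not known to be finitely complete. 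The subtlety is that although equalizers need not exist in $\cp\ck$ in general, the specific equalizer needed here is forced to land back in $\cp\ck$ by the pointwise computation in $\Set$, after which reflection by the full embedding finishes the argument.
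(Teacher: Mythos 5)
Your proof is correct and follows essentially the same route as the paper's: subobjects of $G$ are encoded as quotients $G+G\twoheadrightarrow P$ via cokernel pairs (computed as small pointwise pushouts in $[\ck^{\op},\Set]$), and conversely quotients of $F$ are encoded as subobjects of $F\times F$ via kernel pairs, using Corollary \ref{lim1} and Remark \ref{infinitary}. The only detail you leave implicit is that a monomorphism of $\cp\ck$ is pointwise monic in $[\ck^{\op},\Set]$ (immediate from Yoneda, since representables lie in $\cp\ck$), which is what makes the pointwise equalizer of the cokernel pair recover $m$ itself rather than its image.
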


\begin{proof}
(1) Let $\cp \ck$ be cowellpowered. Following Remark \ref{infinitary}, every monomorphism $\mu \colon F\to G$ in $\cp \ck$
is monic in $[\ck^{\op}, \Set]$, too. Form the pushout in $[\ck^{\op}, \Set]$:

 $$
\xymatrix{
& F \ar [dl]_{\mu} \ar [dr]^{\mu}& \\ 
 G \ar [dr]_{\alpha_1}^{\quad}& &G \ar [dl]^{\alpha_2}\\
 & H & }
 $$
Since $F$ and $G$ are small, so are $H$ and $G+G$. Thus we obtain a quotient $[\alpha_1, \alpha_2]\colon G+G \to H$ in $\cp \ck$.

Distinct subobjects of $G$ in $\cp\ck$ yield distinct quotients in $\cp \ck$. Indeed, $\Set$ certainly has the corresponding property, therefore, so does $[\ck^{\op}, \Set]$. Since $G+G$ has only a set of quotients, $G$ has only a set of subobjects.

(2) Let $\ck$ be finitely pre-complete, and $\cp \ck$ be wellpowered. Following Remark \ref{infinitary}, every epimorphism $\alpha:F\to G$ is regular in  $\cp \ck$. Hence it is determined by its kernel pair $\beta_1,\beta_2:H\to F$. Since $H$ is a subobject of $F\times F$,
$\cp \ck$ is cowellpowered.
\end{proof}

\begin{prop}\label{well1}
If a wellpowered category $\ck$ has all morphisms monic, then $\cp \ck$ is cowellpowered.
\end{prop}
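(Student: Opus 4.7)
The plan is to bound the quotient objects of any $F\in\cp\ck$ by a set, by identifying them with congruences in the ambient presheaf topos and then using the monic-everywhere and wellpoweredness hypotheses to enumerate the congruences.

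As a preliminary I would show that the epimorphisms of $\cp\ck$ coincide with the pointwise surjective maps. Given $\alpha\colon F\to G$ epi in $\cp\ck$, factor it in $[\ck^{\op},\Set]$ through its pointwise image $I\subseteq G$; since $I$ is a quotient of the small presheaf $F$ it is again small, hence lies in $\cp\ck$. The resulting factorisation $\alpha=m\circ e$ has $e$ pointwise surjective (hence epi in $\cp\ck$) and $m$ mono, and from $\alpha$ epi one deduces that $m$ is epi in $\cp\ck$. Computing the cokernel pair of $m$ pointwise (colimits in $\cp\ck$ agree with colimits in $[\ck^{\op},\Set]$) shows that if $I(X)\subsetneq G(X)$ for some $X$, then the two coprojections separate an element of $G(X)\setminus I(X)$, contradicting $m$ epi. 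Hence $I=G$ and $\alpha$ is a pointwise surjection. Consequently, quotients of $F$ in $\cp\ck$ correspond, up to isomorphism of the target, with congruences $R\subseteq F\times F$ in the topos $[\ck^{\op},\Set]$, via $\alpha\mapsto\ker\alpha$ and $R\mapsto(F\twoheadrightarrow F/R)$; the quotient $F/R$ is automatically small because $F$ is.

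The core of the argument is then to bound the congruences on $F$ by a set. Since $F$ is small it is petty, so by Remark \ref{pre0}(b) there is a set $\{(P_t,p_t)\}_{t\in T}$ such that every element of $F(X)$ has the form $F(f)(p_t)$ for some $t\in T$ and some $f\colon X\to P_t$. Hence any triple $(X,a,b)\in\el(F\times F)$ can be represented by a span $P_t\xleftarrow{f} X\xrightarrow{g} P_s$ in $\ck$. The hypothesis that every morphism of $\ck$ is monic forces $f$ to exhibit $X$ as a subobject of $P_t$; by wellpoweredness of $\ck$ there are only set-many such subobjects of $P_t$, and for a chosen representative $X\hookrightarrow P_t$ the second leg $g$ lies in the hom-set $\ck(X,P_s)$. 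Therefore the isomorphism classes of objects of $\el(F\times F)$ form a set $\Sigma$. Any subfunctor $R\subseteq F\times F$ is automatically closed under isomorphism of elements, so it is determined by the subset of $\Sigma$ of classes it contains. This gives at most $2^{|\Sigma|}$ congruences on $F$, hence set-many quotients, and $\cp\ck$ is cowellpowered.

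The main obstacle is the counting of isomorphism classes in $\el(F\times F)$: both hypotheses intervene there and interact subtly. The all-monic hypothesis converts the choice of a representative $f\colon X\to P_t$ into the choice of an element of $\Sub(P_t)$, and wellpoweredness then makes $\Sub(P_t)$ small. Without monicity, different $f$'s with the same codomain would represent the ``same'' slice datum only up to a weaker equivalence, and one could no longer cut the collection of isomorphism classes down to a set by invoking only wellpoweredness.
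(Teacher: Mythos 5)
Your core argument is sound and, in substance, follows the same strategy as the paper's proof: both reduce the problem to (i) epimorphisms of $\cp\ck$ being pointwise surjective, so that a quotient of $F$ is determined by its pointwise kernel data, and (ii) the observation that ``all morphisms monic'' plus wellpoweredness of $\ck$ bounds that data by a set. Only the bookkeeping differs. The paper fixes a set $M_F$ of representatives of the objects $X$ with $FX\neq\emptyset$ (a set because any such $X$ admits a --- necessarily monic --- morphism into one of the $P_t$, hence is a subobject of $P_t$) and counts families of equivalence relations on the sets $FX$, $X\in M_F$; you instead count congruence subfunctors of $F\times F$ via the isomorphism classes of $\el(F\times F)$. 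Your counting is correct: the assignment $(t,s,[m\colon X\rightarrowtail P_t],g\colon X\to P_s)\mapsto (X,\,F(m)(p_t),\,F(g)(p_s))$ is surjective onto iso-classes of $\el(F\times F)$, and a subfunctor of $F\times F$ is indeed closed under, and determined by, the iso-classes of elements it contains.

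One step is asserted without justification: twice you claim that a pointwise quotient of a small presheaf is small (for the image $I$ and for $F/R$). This is not obvious and is not something the paper proves; a quotient of a small functor is petty, but pettiness does not imply smallness without further hypotheses (compare Lemma \ref{pre3}, which needs lucidity to pass from petty to small). For $F/R$ the claim is harmless, since your count only needs the \emph{injection} from quotient objects of $F$ in $\cp\ck$ into congruences, not that every congruence yields an object of $\cp\ck$. But the image factorization genuinely leans on $I\in\cp\ck$: otherwise $m$ is not a morphism of $\cp\ck$, and its cokernel pair $G+_I G$ need not lie in $\cp\ck$, so ``$m$ epi'' gives you nothing. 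The repair is to discard the image and form the cokernel pair of $\alpha$ itself: $G+_F G$ is a finite colimit of small functors, hence small, and is computed pointwise; if some $\alpha_X$ were not surjective the two coprojections would differ, contradicting $\alpha$ epi in $\cp\ck$. With that adjustment your proof is complete.
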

\begin{proof}
 (1) For every functor $F\colon \ck^{\op}\to \Set$ denote by $M_F$ a class of objects representing all $X$ with $FX\ne \emptyset$ up to isomorphism. Then we verify that $F$ is small iff $M_F$ is a set.
 
Indeed, let $F$ be small. If $F= \ck(-, A)$, then $M_F$ is a set because $A$ has only a set of subobjects. If $F$ is a colimit of a diagram with objects $F_i$, $i\in I$, then clearly $M_F=\underset{i\in I}{\bigcup} M_{F_i}$. This is a set if $F$ is small.
 
Conversely, let $M_F$ be a set. Recall that $F$ is a colimit of the diagram of its elements. More precisely, we form the category $\el F$ of all pairs $(X,x)$ where $X$ is an object of $\ck$ and $x\in FX$. Morphisms $f\colon (X,x) \to (Y,y)$ are those morphisms $f\colon X\to Y$ of $\ck$ with $Ff(y)=x$.
 The diagram
 $$
 D\colon \el F \to [\ck^{\op}, \Set]\,, \qquad (X,x)\mapsto \ck (-, X)
 $$
 has canonical colimit $F$. Since $M_F$  is a set, $\el F$ is essentially a small category, thus, $F$ is small.
 
(2) The category $\ck$ clearly has non-empty pre-limits. (Indeed, choose an object $A$ of the given diagram $D$. Then every cone of $D$ has a subobject of $A$ as its domain.) Thus, as in Lemma \ref{pretopos}, any epimorphism $\alpha:F\to G$ is regular and thus it
is a coequalizer in $[\ck^{\op}, \Set]$, which clearly implies $M_F=M_G$. Therefore, every quotient of $F$ in $\cp \ck$ is determined by quotients of $FX$, $X\in M_F$ in $\Set$. This proves that all quotients form a set.
\end{proof}

\begin{prop}\label{cowell1}
$\cp\ck$ is cowellpowered for every finitely complete, wellpowered category $\ck$ such that
\begin{enumerate}
\item[(1)] all hom-sets of objects that are neither terminal nor initial are non-empty, and
\item[(2)] morphisms with non-terminal codomains have (split epi, mono)-factorizations.
\end{enumerate}
\end{prop}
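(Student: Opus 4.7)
The plan is to reduce cowellpoweredness to wellpoweredness and then bound the subobjects of a fixed small functor $F$ by injecting the class of its subfunctors into an explicit set. Since $\ck$ is finitely complete, hence finitely pre-complete, Proposition~\ref{well}(2) reduces the task to showing that $\cp\ck$ is wellpowered. Fix $F\in\cp\ck$. Smallness of $F$ gives, by Remark~\ref{pre0}(b), a weakly initial set $(X_t,x_t)_{t\in T}$ in $\el F$, and wellpoweredness of $\ck$ makes each $\Sub(X_t)$ a set. Writing $1$ and (when present) $0$ for the terminal and initial objects of $\ck$, I would associate to each subfunctor $G\subseteq F$ the invariant
$$
\Phi(G)=\bigl(G(1),\;G(0),\;\bigl(\{[m]\in\Sub(X_t):Fm(x_t)\in G(Z)\}\bigr)_{t\in T}\bigr),
$$
which visibly takes values in a set (each slot is a subset of a set, and the last family is indexed by $T$).

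The heart of the argument is to show $\Phi$ is injective on subfunctors of $F$. Given $\Phi(G)=\Phi(G')$ and $y\in G(Y)$, the goal is $y\in G'(Y)$. When $Y$ is terminal or initial this is immediate, as $G(Y)$ is part of $\Phi(G)$. Otherwise, pick $t\in T$ and $f:Y\to X_t$ with $Ff(x_t)=y$. If $X_t$ is non-terminal, hypothesis~(2) factors $f=m\circ e$ with $e:Y\to Z$ split epi and $m:Z\hookrightarrow X_t$ mono. Choosing a section $s:Z\to Y$ of $e$ yields
$$
Fs(y)=Fs(Fe(Fm(x_t)))=F(es)(Fm(x_t))=Fm(x_t),
$$
so $y\in G(Y)$ forces $Fm(x_t)=Fs(y)\in G(Z)$ by subfunctor closure. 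Hence $[m]$ lies in the $t$-slot of $\Phi(G)=\Phi(G')$, giving $Fm(x_t)\in G'(Z)$, and finally $y=Fe(Fm(x_t))\in G'(Y)$ by subfunctor closure of $G'$.

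The main obstacle I anticipate is the degenerate case $X_t=1$, where hypothesis~(2) does not directly apply to $f=!:Y\to 1$. This is precisely where hypothesis~(1) is needed: for $Y$ neither terminal nor initial, (1) supplies a morphism $s:1\to Y$, which automatically splits $!$ since $1$ is terminal, and the previous argument can be rerun with $e=!$ and $m=\id_1$ to deduce $x_t\in G(1)$ iff $y\in G(Y)$. The remaining degenerate shapes of $Y$ (terminal or initial) are the reason $G(1)$ and $G(0)$ are included as separate slots in $\Phi$. Once $\Phi$ is shown injective, the subfunctors, and a fortiori the subobjects, of $F$ in $\cp\ck$ form a set; so $\cp\ck$ is wellpowered, and Proposition~\ref{well}(2) concludes cowellpoweredness.
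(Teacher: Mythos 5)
Your route is genuinely different from the paper's (which bounds subobjects of coproducts of representables directly, recovers quotients from kernel pairs, and separately shows that every subfunctor of a representable is petty), and the reduction to wellpoweredness via Proposition~\ref{well}(2) together with the invariant $\Phi$ is an attractive idea; the non-degenerate branch of your injectivity argument is correct. However, there is a genuine gap in the degenerate case $X_t\cong 1$: hypothesis~(1) does \emph{not} supply a morphism $s\colon 1\to Y$. Condition~(1) only asserts that $\ck(A,B)\neq\emptyset$ when \emph{both} $A$ and $B$ are neither terminal nor initial; since $1$ is terminal, it says nothing about $\ck(1,Y)$. This is not a pedantic reading: for $\ck=\Set^{\op}$ --- one of the categories the proposition is meant to cover (Example~\ref{cowell2}) --- the terminal object is the empty set and $\ck(1,Y)=\Set(Y,\emptyset)=\emptyset$ for every non-terminal $Y$, so $!\colon Y\to 1$ is never split epi there. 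Consequently $\Phi$, as defined, is not shown to be injective whenever some element of $F$ over a non-degenerate object is reachable only from a terminal $X_t$ (e.g.\ $F=\ck(-,1)$ with the weakly initial set $\{(1,\id_1)\}$).

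The gap is repairable, but it needs a real argument in place of the claimed section. Two options: (i) re-route the covering using condition~(1): if a non-degenerate object $Q$ exists, replace each terminal $(1,x_t)$, for the purpose of covering elements over non-degenerate $Y$, by $(Q,\,F(!_Q)(x_t))$ --- condition (1) gives some $g\colon Y\to Q$, and $F(g)\bigl(F(!_Q)(x_t)\bigr)=F(!_Y)(x_t)$ --- so that hypothesis~(2) always applies to the covering morphism; or (ii) argue as the paper does for subfunctors of $\ck(-,1)$: the class $\{Y : F(!_Y)(x_t)\in G(Y)\}$ is closed under domains of morphisms, so by (1) its trace on the non-degenerate objects is either empty or everything, and adding this one extra bit per terminal $X_t$ to $\Phi$ restores injectivity. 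Either way, as written the step ``(1) supplies $s\colon 1\to Y$'' is false and the proof does not close without replacing it. (Two smaller points you should also record: monomorphisms of $\cp\ck$ are pointwise monomorphisms --- Remark~\ref{infinitary} together with closure under finite limits --- so subobjects in $\cp\ck$ really do embed into subfunctors; and the $t$-th slot of $\Phi$ is well defined on isomorphism classes of subobjects of $X_t$, which follows from naturality.)
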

\begin{proof}
(a) We first prove for every coproduct of representables
$$
F=\coprod\limits_{i\in I} \ck(-,A_i)
$$
that it has only a set of subobjects in $\cp\ck$. Since $\cp\ck$ is closed under coproducts in $[\ck^{\op},\Set]$ and $[\ck^{\op},\Set]$
is infinitary extensive, it is sufficient to prove that every representable functor has only a set of subobjects.

To give a subobject $G$ of $\ck(-,A)$ means to give, for every object $X$ of $\ck$, a subset $GX\subseteq \ck(X,A)$ such that for all morphisms
$f:Y\to X$ we have 
$$
u\in GX\text{ implies } uf\in GY.
$$
In case $A$ is a terminal object, then $G$ is determined by a class $\cg\subseteq\obj\ck$ such that no morphism with domain in $\cg$ has codomain in the complement $\obj\ck\setminus\cg$. Due to (1) either $\cg$ or its complement consists of objects isomorphic to $0$ (initial) or $1$ (terminal), thus, the number of such subfunctors $G$ is finite.

Suppose $A$ is non-terminal. We prove that $G$ is determined by its values at (i) 1 and (ii) subobjects of $A$. Since $\ck$ is wellpowered,
this proves that there is only a set of possibilities.

For that, consider an object $X$. Given $f\in GX$ factorize it as a split epic $e_f:X\to A'$ followed by a monic $m_f$.
If $r$ splits $e_f$ then $m_f=fr$ and, since $f\in GX$, we have $m_f\in GA'$. Conversely, since $f=m_fe_f$, $f\in GX$ whenever 
$m_f\in GA'$. This proves our claim that $G$ is determined by $G1$ and all $GA'$.

(b) The full subcategory of $[\ck^{\op},\Set]$ formed by all petty functors is cowellpowered. Indeeed, we only need to prove that coproducts of hom-functors $F=\coprod_{i\in I} \ck(-,A_i)$ have sets of quotients in $[\ck^{\op},\Set]$. Observe that $F\times F$ is also a coproduct of hom-functors $\coprod_{i,j\in I} \ck(-,A_i\times A_j)$, thus, by (a) it has only a set of quotients: see the proof of Proposition \ref{well}. 

(c) Following Remark \ref{pre0}, it is sufficient to prove that every subfunctor of a hom-functor is petty.  

1. Consider first $\ck(-,1)$ for the terminal object $1$. To give a subfunctor $M$ means to specify a class $\ca$ of objects such that 
$A\in\ca$ and $B\in\ck\setminus\ca$ implies $\ck(B,A)=\emptyset$; then the subfunctor assigns $\emptyset$ to objects of $\ca$ and $1$ else.  Due to (1), the only possibilities are $\ca=\emptyset$ or $\ca$ consisting of initial objects. In both cases every element $x\in MB$ 
yields a weakly initial object of $\el M$, thus $M$ is petty.

2. Next let $M$ be a subfunctor of $\ck(-,A)$ where $A$ is non-terminal. Without loss of generality assume that $MX\subseteq\ck(X,A)$ for all $X\in\ck$ and that on morphisms $f:X\to Y$ we have $(Mf)(u)=uf$. Since $\ck$ is wellpowered, we have a set $m_i:A_i\to A$ $(i\in I)$ of subobjects representing the images of all morphisms $f:X\to A$, $X\in\ck$, lying in $MX$. As above, each $m_i$ lies in $MA_i$. 
The elements $m_i$, $i\in I$, form a weakly initial set in $\el M$: for every $f\in MX$ as above we have $f=(Me)(m_i)$.  
\end{proof}

\begin{example}\label{cowell2}
$\cp\ck$ is wellpowered and cowellpowered for all of the following categories and their duals:

(1) $\Set$.

(2) Vector spaces over every field.

(3) Sets and partial functions.

\noindent
Indeed, $\ck$ as well as $\ck^{\op}$ satisfy the assumptions of the above Proposition. ($\Set^{\op}$ was the reason for the complicated condition (2).)
\end{example}

Recall that a category $\ck$ is \textit{concrete} it there exists a faithful functor $U:\ck\to\Set$. 
 
\begin{remark}\label{concrete}
(1) For a finitely pre-complete category $\ck$, the category $\cp\ck$ is concrete if and only if it is wellpowered.
 
Indeed, $\cp\ck$ is finitely complete (Corollary \ref{lim1}) and monomorphisms are regular (Remark (\ref{infinitary}).
Thus the result follows from \cite{F1} Theorem 4.1(iii). 

(2) The limit completion of a category $\ck$ is dual to $\cp(\ck^{\op})$. Thus the limit completion of the categories in the above example are wellpowered and cowellpowered.
\end{remark}
 
\begin{remark}\label{cowell3}
(1) The category
$$
\Acc[\Set,\Set]
$$
of all accessible set functors  is wellpowered and cowellpowered. Indeed, recall from Remark \ref{completion}(e) that
this is precisely the category $\cp(\Set^{\op})$. Recall also that this category is a locally cartesian closed pretopos (see Corollary \ref{pre-cc3} and Lemma \ref{pretopos}).

(2) This category has also been studied by Barto \cite{B} who has also proved that it is concrete, and \textit{universal}. That is,
every concrete category can be fully embedded into $\Acc[\Set,\Set]$. 
\end{remark}

Recall that a \textit{clique} on a set $X$ is a graph whose arrows lead from every node to all
distinct nodes.
Graph homomorphisms between cliques are precisely the monic maps. Therefore, if $A_k$ is a clique on
$\aleph_k$ nodes, then
\begin{enumerate}
	\item[(a)]for all ordinals $k$ infinitely many morphisms exist from $A_0$ to $A_k$, and
	\item[(b)]no morphism from $A_k$ to $A_i$ exists if $i<k$.
\end{enumerate}

Recall that a class $\crr$ of morphisms of a category $\ck$ is \textit{right-cancellative} if $uv\in\crr$ implies that $u\in\crr$.

\begin{definition}\label{clique}
	A class of objects $A_k$  for $k\in \Ord$ of a category is called \textit{clique-like} provided that
	there exists a right-cancellative\footnote{We only need a weaker condition: given $uv\in\crr$ and $v\in\crr$, then $u\in\crr$.}  
	class $\mathcal R$ of morphisms such that
	
	\begin{enumerate}
	\item[(a)] for every ordinal $k$ more than one morphism from $A_0$ to $A_k$ exists in $\mathcal R$, and
	\item[(b)] no morphism from $A_k$ to $A_i$ exists in $\mathcal R$ either
	
	(b1) for all pairs of ordinals $0<i<k$

or

(b2) for all pairs of ordinals $0<k<i$.

\end{enumerate}
\end{definition}

\begin{prop}\label{clique0}
	If a category $\ck$ has a clique-like class of objects, then $\cp(\ck^{\op})$ is not cowellpowered.
\end{prop}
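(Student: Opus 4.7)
The plan is to exhibit a proper class of pairwise non-equivalent regular-epi quotients of the representable functor $Y(A_0)=\ck(A_0,-)$ in $\cp(\ck^{\op})$, where $Y\colon\ck^{\op}\to\cp(\ck^{\op})$ denotes the Yoneda embedding. For each ordinal $k\geq 1$, use Definition \ref{clique}(a) to pick distinct morphisms $f_k,g_k\colon A_0\to A_k$ lying in $\crr$, and form the coequalizer
$$
Y(A_k) \xrightarrow{\;Y(f_k),\,Y(g_k)\;} Y(A_0) \xrightarrow{\;q_k\;} Q_k
$$
in $\cp(\ck^{\op})$. Because $\cp(\ck^{\op})$ is closed under small colimits in $[\ck,\Set]$, this coequalizer is computed pointwise: $Q_k(X)=\ck(A_0,X)/{\sim_k^X}$, where $\sim_k^X$ is the equivalence relation on $\ck(A_0,X)$ generated by $u\circ f_k\sim u\circ g_k$ for $u\colon A_k\to X$.

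To distinguish $Q_k$ from $Q_i$ for distinct $k,i\geq 1$, I evaluate at $X=A_k$. The witness $u=\id_{A_k}$ gives $(q_k)_{A_k}(f_k)=(q_k)_{A_k}(g_k)$. The central claim is that, for an appropriate ordering of the pair $(i,k)$, $(q_i)_{A_k}(f_k)\neq(q_i)_{A_k}(g_k)$; since we may always switch the roles of $i$ and $k$, one of the two orderings will apply. Concretely I show that the $\sim_i^{A_k}$-class of $f_k$ in $\ck(A_0,A_k)$ is the singleton $\{f_k\}$, and likewise for $g_k$. Suppose $f_k$ were single-step related to some $h$; then either $f_k=u\circ f_i$ with $h=u\circ g_i$, or $f_k=u\circ g_i$ with $h=u\circ f_i$, for some $u\colon A_i\to A_k$. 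Since $f_k,f_i,g_i\in\crr$, the (weakened) right-cancellative hypothesis on $\crr$ forces $u\in\crr$. Under (b1), no such $\crr$-morphism $A_i\to A_k$ exists when $0<k<i$; under (b2), none exists when $0<i<k$. Thus for one of the two orderings of $(i,k)$ we reach a contradiction, ruling out any nontrivial single-step relation at $f_k$. The analogous argument handles $g_k$.

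It follows that the kernel congruences of $q_k$ and $q_i$ differ at $A_k$: any isomorphism $\phi\colon Q_k\to Q_i$ satisfying $\phi\circ q_k=q_i$ would, evaluated at $A_k$, force $(q_i)_{A_k}(f_k)=(q_i)_{A_k}(g_k)$, contradicting the previous step. Hence the regular-epi quotients $(Q_k,q_k)$ for $k\in\Ord$, $k\geq 1$, form a proper class of pairwise non-equivalent quotients of the single object $Y(A_0)$, so $\cp(\ck^{\op})$ is not cowellpowered.

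The principal obstacle is ruling out that a \emph{multi-step} chain in the generated equivalence $\sim_i^{A_k}$ connects $f_k$ with $g_k$. This is handled uniformly by the single-step analysis: membership of $f_k$ (resp.\ $g_k$) in $\crr$, combined with right-cancellativity and the index-constraint (b1)/(b2), precludes the existence of the required factoring morphism $u$, so the $\sim_i^{A_k}$-class of $f_k$ (resp.\ $g_k$) is trivially a singleton and no chain can even begin.
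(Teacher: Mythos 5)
Your proposal is correct and follows essentially the same route as the paper: the paper coequalizes all $\crr$-morphisms $A_0\to A_k$ at once (a multiple coequalizer) rather than a chosen pair, and evaluates at the lower-indexed object, but the key mechanism — using $\id$ to witness merging in one quotient, and the weakened right-cancellativity of $\crr$ together with (b1)/(b2) to show the generating relation cannot even touch the chosen morphisms in the other quotient — is identical. No gaps.
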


\begin{proof}
	For every ordinal $0<k$ denote by
	$$
	\pi_k\colon\ck(A_0,-)\to P_k
	$$
	the multiple coequalizer of the following morphisms of $\cp(\ck^{\op})$ (a full subcategory of
	$\Set^{\ck}$):
	$$
	\ck(f,-)\colon
\ck(A_k,-) \to\ck(A_0,-)
	$$
	for all morphisms $f\colon A_0\to A_k$ of $\mathcal R$. These small functors $P_k$ are quotients of
	$\ck(A_0,-)$ in $\cp(\ck^{op})$.
	It remains to prove that they are pairwise non-isomorphic. Indeed, consider $0<i<k$.

Assume first that (b1) holds. Choose distinct morphisms $g,h\colon A_0 \to A_i$ in $\mathcal R$. They are merged by $(\pi_i)_{A_i}$ because
the element $\id_{A_i}$ of $\ck(A_i,A_i)$ is sent to $g$ by $\ck(g,-)$ and to $h$ by $\ck(h,-)$. But $(\pi_k)_{A_i}$ does not merge $g$ and $h$. This follows from the fact that no element $u$ of $\ck(A_k,A_i)$ lies in $\mathcal R$. Therefore, due to the right cancellativity,
$g$ cannot be a composition $uf$ for any $f:A_0\to A_k$ in $\crr$ and any $u:A_k\to A_i$. Thus $g$ cannot be merged with any distinct morphism from $A_0$ to $A_i$ by $(\pi_k)_{A_i}$.

In the case that (b2) holds the proof is the same, we just calculate the components $(\pi_i)_{A_k}$ and $(\pi_k)_{A_k}$.

\end{proof}

\begin{example}\label{clique1}
	In the category $\Set_m$ of sets and monomorphisms we have a clique-like collection: choose a set $A_k$
	of power $\aleph_k$ and $\crr =$ all morphisms. Thus $\cp(\Set_m^{op})$
	is not cowellpowered.
	Compare this with the fact that by Proposition \ref{well1} $\cp\Set_m$ is.
	
\end{example}

\begin{example}\label{clique2}
	For the following categories $\ck$ the colimit completions $\cp\ck$ and
	$\cp(\ck^{\op})$ are not cowellpowered: we present a clique-like class of objects in $\ck$ as well as
	in its dual. Here $\mathcal R$ is always chosen
	to be the class of all morphisms of $\ck$.
	
	(1) The category of graphs: let $A_k$ be a clique on $\aleph_k$ vertices.
	This is a clique-like class in $\Gra$. Now take the same class except changing $A_0$
	to the complete graph on 2 vertices (so that more than one homomorphism
	exists from every nonempty graph into $A_0$). The result is a clique-like
	class in $\Gra^{op}$.
	
	(2) Any \textit{alg-universal} category $\ck$, i.e, one admitting a full embedding $E\colon\Gra\to\ck$.
	The monograph \cite{PT} presents a number of examples of such categories, e.g., semigroups or rings
	with unit.
	
	If $A_k$ is
	a clique-like collection in $\Gra$ or $\Gra^{op}$, then $EA_k$ is one in $\ck$ or $\ck^{op}$, resp.

(3) The category $\cp\Set^{op}$ of accessible set functors is, as we have seen above, cowellpowered. By Remark \ref{cowell3}(2), 
this category is alg-universal, therefore $\cp(\cp\Set^{op})$ is not cowellpowered.

(4) The category $\Un$ of algebras on one unary operation. To present a clique-like class, we
first construct unary algebras $B_i$ for $i\in\Ord$ with no homomorphisms from $B_i$ to $B_j$ whenever
$i>j + \omega$ .

Recall from \cite{F} the concept of the \textit{rank} $r(x)$ of an element $x$ of an algebra $A$,
which is either an ordinal or $\top$, an element
larger than all ordinals. We work with $A$ given by a set $X$ and its
endomap $\alpha$. The rank is defined to be $r(x) =\top$ iff
there exist elements $x_n$ with $\alpha(x_{n+1})=x_n$
for all $n<\omega$ and $\alpha(x_0)=x$. For all other elements $x$ of $A$ rank is defined by
$$
r(x) =\sup \{ r(y); y\in X, \alpha (y)=x\}.
$$

Every homomorphism $h\colon (X, \alpha) \to (X', \alpha')$ is nondecreasing on ranks:
$$r( h(x)) \geq r(x)$$
for all $x\in X$. See \cite{F}, Proposition 5.2.

Let us construct algebras $B_i=(X_i, \alpha_i)$ for $i\in \Ord$ such that $B_i$ contains an element
$x_i$ of rank $i$, but no element
of  rank at least $ i + \omega$. We proceed by transfinite induction:

\begin{itemize}
	
	\item For $i=0$ put $X_0=\N$, $\alpha_0(n) = n+1$ and $x_0 =0$.
	
	\item Given $(X_i, \alpha_i, x_i)$ put $X_{i+1}= X_i$, $\alpha_{i+1}=\alpha_i$ and $x_{i+1}
	=\alpha_i(x_i)$.
	
	\item Given a limit ordinal $j$ put $(X_j, \alpha_j)=\coprod\limits_{i<j} (X_i, \alpha_i)/\approx$,
	where $\approx$ is the least congruence with all
	$x_i$, $i<j$, forming one class. Call that class $x_j$.
	
\end{itemize}

Now to obtain a clique-like collection $A_k$ of algebras in $\Un$, let $A_0$ be the free algebra on
one generator (in which all ranks are finite).
For every ordinal $k>0$ let $A_k$ be the above algebra $B_i$ for $i={\aleph_k}$ (which has an element
of rank ${\aleph_k}$ but none of rank ${\aleph_{k+1}}$).

Analogously a clique-like class in $\Un^{op}$ is given: just change $A_0$
to be the cofree unary algebra on a 2-element set X. (This can be described as the algebra $X^{\N}$
with the operation assigning to every map
$f\colon \N \to X$ the map $f(1+-)$, whereas the couniversal morphism is the evaluation at $0$.)
\end{example}

 \begin{remark}\label{clique4}
 	Let $\ck$ be a concrete category for which a faithful functor $U$ to the category of sets is given. Recall that $(\ck,U)$ is called
 	\textit{almost alg-universal} if there exists
 	an embedding $E\colon\Gra \to \ck$ which is \textit{almost full}. This means
 	that every morphism $f\colon EX \to EY$ of $\ck$ with $Uf$ non-constant has the
 	form $f=Eh$ for a unique graph homomorphism $h:X \to Y$.
 	
 	Given a clique-like collection $A_k$ in $\Gra$, it then follows
 	that $EA_k$ is clique-like in $\ck$: choose $\mathcal R$ to be the
 	class of all morphisms $f$ with $Uf$ nonconstant. Analogously,
 	every clique-like collection in $\Gra^{op}$ yields one in $\ck^{op}$.
\end{remark}

\begin{example}\label{clique5}
	For the following categories $\ck$ the colimit completions $\cp\ck$ and
	$\cp(\ck^{\op})$ are not cowellpowered: as proved in
	\cite{PT}, each of these categories is almost alg-universal.
	
	(1) Monoids and homomorphisms.
	
	(2) Posets and monotone functions.
	
	(3) Lattices and homomorphisms.
	
	(4) Topological spaces and continuous functions.
	
	(5) Small categories and functors. Here $U$ assigns to every small category the set of all morphisms.
	
	Under the assumption that no cardinal is measurable, further examples are
	
	(6) Compact Hausdorff spaces and continuous functions.
	
	(7) Metric spaces and (uniformly) continuous functions.
	
\end{example}

\begin{example}\label{Ab}
	For the category $\Ab$ of abelian groups we also have that
	$\cp\Ab$ and $\cp(\Ab^{op})$ are not cowellpowered.
	This follows from the result of Prze\'zdziecki \cite{Pr}
that there exists an embedding $E\colon\Gra \to \Ab$ such that for every
pair of graphs $X$ and $Y$ the homomorphism group $\Ab(EX,EY)$
is the free abelian group on $\Gra(X,Y)$. In particular, if $C_k$
is a clique on $\aleph_k$ vertices in $\Gra$, then the abelian groups
$A_k=EC_k$ fulfil for every ordinal $k$ that

(a) there exist infinitely many homomorphisms from $A_0$ to $A_k$, and

(b) if $k>i$, there exist no non-zero homomorphism from $A_k$ to $A_i$.

Thus, we get a clique-like class in $\Ab$ w.r.t. the collection $\mathcal R$ of zero homomorphisms.

Analogously, a clique-like class on $\Gra^{op}$ yields a clique-like class
in $\Ab^{op}$.

\end{example}

In all of the above concrete examples except $\Set_m$ the categories $\ck$ are finitely complete and cocomplete. Thus, the fact that the
colimit completions of those categories (and their duals) are not cowellpowered
implies that they are also not wellpowered, see Proposition 5.1.

\end{document}